\documentclass{aptpub}

\synctex=1

\authornames{S.~Banerjee and W.S.~Kendall} 
\shorttitle{Coupling the Kolmogorov Diffusion} 



 \newcommand{\Title}{Coupling the Kolmogorov Diffusion: \\ maximality and efficiency considerations}

 \newcommand{\Expect}[1]{\operatorname{\mathbb{E}}\left[#1\right]}

 \newcommand{\Ito}{It\^o}
 \newcommand{\Law}[1]{\mathcal{L}\left({#1}\right)}
 \newcommand{\Prob}[1]{\operatorname{\mathbb{P}}\left[#1\right]}
 \newcommand{\Reals}{\mathbb{R}}
 \newcommand{\Var}[1]{\operatorname{Var}\left[#1\right]}

 \renewcommand{\d}{\operatorname{d}}
 \DeclareMathOperator{\dist}{dist}
 \newcommand{\distTV}{\dist_{\textsc{TV}}}

 \newcommand{\avec}{\operatorname{\underline{\mathit{a}}}}
 \newcommand{\ee}{\operatorname{\underline{\mathit{e}}}}
 \newcommand{\origin}{\underline{\text{0}}}
 \newcommand{\xx}{\operatorname{\underline{\mathit{x}}}}
 \newcommand{\yy}{\operatorname{\underline{\mathit{y}}}}
 \newcommand{\zz}{\operatorname{\underline{\mathit{z}}}}
 
 \newcommand{\DDi}{\mathit{D}}
 \newcommand{\DD}{\operatorname{\underline{\underline{\DDi}}}}
 
 \newcommand{\HHi}{\mathit{H}}
 \newcommand{\HH}{\operatorname{\underline{\underline{\HHi}}}}
 
 \newcommand{\EEi}{\mathit{E}}
 \newcommand{\EE}{\operatorname{\underline{\underline{\EEi}}}}
 
 \newcommand{\PPi}{\mathit{P}}
 \newcommand{\PP}{\operatorname{\underline{\underline{\PPi}}}}
 
 \newcommand{\Koli}{\mathit{I}}
 \newcommand{\Kol}{\operatorname{\underline{\Koli}}}
 
 \newcommand{\LLi}{\mathit{L}}
 \newcommand{\LL}{\operatorname{\underline{\underline{\LLi}}}}
 
 \newcommand{\VVi}{\mathit{V}}
 \newcommand{\VV}{\operatorname{\underline{\underline{\VVi}}}}
 
 \newcommand{\WWi}{\mathit{W}}
 \newcommand{\WW}{\operatorname{\underline{\WWi}}}
 
 \newcommand{\ZZi}{\operatorname{\mathit{Z}}}
 \newcommand{\ZZ}{\operatorname{\underline{\ZZi}}}



\begin{document}


\title{\Title} 

\authorone[University of Warwick]{Sayan Banerjee} 

\addressone{Department of Statistics, University of Warwick, Coventry CV4 7AL} 

\authortwo[University of Warwick]{Wilfrid Kendall} 

\addresstwo{Department of Statistics, University of Warwick, Coventry CV4 7AL} 

\setcounter{footnote}{0}
\renewcommand{\thefootnote}{\arabic{footnote}}

\begin{abstract}
This is a case study concerning the rate at which probabilistic coupling occurs for nilpotent diffusions.
We focus on the simplest case of Kolmogorov diffusion 
(Brownian motion together with its time integral or, more generally, together with a finite number of iterated time integrals).
We show that in this case there can be no Markovian maximal coupling. 
Indeed, there can be no \emph{efficient} Markovian coupling strategy (efficient for all pairs of distinct starting values),
where the notion of efficiency extends the terminology of
Burdzy \& Kendall (``Efficient Markovian couplings: examples and counterexamples'', \emph{Annals of Applied Probability}, 2000, 10.2, 362--409).
Finally, at least in the classical case of a single time integral,
it is not possible to choose a Markovian coupling that is optimal in the sense of simultaneously minimizing the
probability of failing to couple by time \(t\) for all positive \(t\).
In recompense for all these negative results, we exhibit a simple efficient non-Markovian coupling strategy.
\end{abstract}

\keywords{} 
 Brownian motion;
 Brownian time integral;
 co-adapted coupling; 
 coupling;
 efficient coupling;
 filtration;
 finite-look-ahead coupling;
 hypoelliptic diffusion;
 immersed coupling;
 Karhunen-Lo\`eve expansion;
 Kolmogorov diffusion;
 Markovian coupling;
 maximal coupling;
 nilpotent diffusion;
 optimal Markovian coupling;
 reflection coupling;
 synchronous coupling

\ams{60G05}{60J60} 

\section{Introduction} \label{sec:intro}
This paper is written in homage and thanks to our friend and colleague Nick Bingham, who has always made it his mission to encourage and to spur on younger 
colleagues.
It is a case-study of probabilistic coupling for a particular simple non-elliptic diffusion, namely the Kolmogorov diffusion \((B, \int B\,{\d}t)\)
(Brownian motion together with its time integral), studied for example in McKean's celebrated stochastic oscillator paper \cite{McKean-1963}.
The work forms part of a long-running programme of study of coupling for nilpotent diffusions,
by which we mean, diffusions with infinitesimal generators of the form
\[
 \chi_0 + \sum_{i=1}^k \chi_i^2
\]
for smooth vectorfields \(\chi_0\), \(\chi_1\), \(\chi_2\), \ldots, \(\chi_k\) such that the Lie algebra generated by the vectorfields is nilpotent
(iterated Lie brackets of the vectorfields vanish at a sufficiently high order of iteration),
so that consequently the diffusion has smooth positive probability transition densities.
(Note that there is a fully worked out structure theory for diffusions for which the vectorfields form a nilpotent Lie algebra: see for example \cite[Section 4.9]{Kunita-1997}.)
As a case-study this work can be 
usefully compared with the study  \cite{Kendall-2013a} of probabilistic coupling for scalar Brownian motion together with local time at the origin.
In this introductory section, we begin by making a careful definition of an iterated generalization of \((B, \int B\,{\d}t)\),
and then introduce some key coupling concepts.

\subsection{The Kolmogorov diffusion} \label{sec:kolmogorov}
The classical two-component Kolmogorov diffusion is obtained by pairing a real Brownian motion \(B\) with its time integral \(\int B\,{\d}t\).
The vectorfields in question are \(\chi_0=x_1\partial/\partial x_2\) and \(\chi_1=\partial/\partial x_1\), and nilpotence follows from the fact that
\([\chi_1,\chi_0]=\chi_2=\partial/\partial x_2\), and noting that \([\chi_1,\chi_2]=[\chi_0,\chi_2]=0\).
The diffusion \((B, \int B\,{\d}t)\) was studied, for example, by McKean \cite{McKean-1963} as a simple example of a stochastic oscillator; see also \cite{AliliWu-2014}.
Distributional properties have been investigated, for example, in \cite{GroeneboomJongbloedWellner-1999,WellnerSmythe-2001,KearneyMajumdar-2005},
while \(L^p\) estimates are studied in \cite{Yan-2004} when Brownian motion is replaced by a continuous local martingale.
It has arisen in statistical studies: see \cite{AueHorvathHuskova-2009} for an application to polynomial regression.
There are potential applications (for example, to model relativistic diffusion of photons \cite{Bailleul-2008}, also to model the motion of a tracer in fluid flow \cite{JansonsMetcalfe-2005b});
however its main interest is as a simple model for a non-elliptic diffusion.
Coupling properties have been studied in \cite{BenArousCranstonKendall-1995} and numerically in \cite{JansonsMetcalfe-2005b}, 
also (when supplemented by further iterated time integrals) in \cite{KendallPrice-2004};
this problem provides the simplest non-trivial example of a diffusion with nilpotent group symmetries which admits a Markovian or immersed coupling.
Most interest to date has focussed on the classical two-component Kolmogorov diffusion. 
Here we follow \cite{KendallPrice-2004} in considering coupling for (in the most part) the case of index \(k\) (\(k\) iterated time integrals),
since the general structure adds clarity to the arguments.

We begin 
with
an explicit definition.
Given \(B\), a standard real-valued Brownian motion (hence begun at \(0\)), 
we define the \emph{standard} generalized Kolmogorov diffusion 
\(\widetilde{\Kol}=(\widetilde{\Koli}_0, \widetilde{\Koli}_1, \ldots, \widetilde{\Koli}_k)^\top\) of index \(k\) by the finite recursion
\begin{align}
 \widetilde{\Koli}_0(t) \quad&=\quad B(t)\,, \nonumber\\
 \widetilde{\Koli}_r(t) \quad&=\quad \int_0^t \widetilde{\Koli}_{r-1}(s) \,{\d}{s} \qquad \text{for } r=1, 2, \ldots, k\,. 
 \label{eqn:standard-kolmogorov}
\end{align}
Thus \(\widetilde{\Koli}_r\) is simply the \(r\)-fold iterated time-integral of Brownian motion. 
We shall refer to the index \(1\) case as the \emph{classical} Kolmogorov diffusion,
written in column vector form as \((\widetilde{\Koli}_0, \widetilde{\Koli}_1)^\top=(B, \int{B\,{\d}{t}})^\top\).

Nilpotence for \(\widetilde{\Kol}\) follows by considering the infinitesimal generator
\[
\chi_0 + \frac12 \chi_1^2
\]
where \(\chi_1=\partial/\partial x_1\) 
(differentiation in the ``Brownian'' direction, temporarily deviating from the indexing convention of \eqref{eqn:standard-kolmogorov} for the sake of convenience of exposition) 
but \(\chi_0=x_1\chi_2+x_2\chi_3+\ldots+x_{k-1}\chi_k\), where \(\chi_r=\partial/\partial x_r\).
An inductive argument based on \([\chi_r,\chi_0]=\chi_{r+1}\) (for \(r=1, \ldots, k-1\)) shows that \(k^\text{th}\)-iterated Lie brackets vanish.

Bearing in mind the form of the law of \(\widetilde{\Kol}(t+s)\) conditional on \(\mathcal{F}_t=\sigma\{B(u):u\leq t\}\), 
we define the index \(k\) Kolmogorov diffusion \(\Kol\), begun at \(\xx=(x_0,x_1,\ldots, x_k)^\top\), using
\begin{equation}\label{eqn:generalized-kolmogorov}
 \Koli_r(t) \quad=\quad \widetilde{\Koli}_r(t) + x_r + t x_{r-1} + \frac{t^2}{2}  x_{r-2} + \ldots + \frac{t^r}{r!}  x_0\,.
\end{equation}
This definition arises from considering the integral curve of \(\chi_0\) determined by the initial values \(\xx=(x_0,x_1,\ldots, x_k)^\top\) of \(\Kol\) at time \(0\),
and using linearity.

The Kolmogorov diffusion is simple enough to allow for explicit calculations, and yet exhibits interesting properties from the point of view of probabilistic coupling.
Much of the interest of the index \(k\) Kolmogorov diffusion \(\Kol\) lies in the observation that
the random vector \(\Kol(t)\) has a positive continuous density over all \(\Reals^{k+1}\) for any positive time \(t>0\).
This is related of course to H\"ormander's hypoellipticity theorem (since \(\Kol\) is indeed a hypoelliptic diffusion), 
but can be seen much more directly by calculating the covariance structure of \(\Kol(t)\) and noting that the resulting variance-covariance matrix is non-singular.
This computation is carried out in section \ref{sec:calcs}; but first we introduce some relevant concepts from coupling.

\subsection{Maximal couplings} \label{sec:maximal}
The technique of probabilistic coupling dates back to Doeblin \cite{Doeblin-1938}.
However Griffeath \cite{Griffeath-1975} was the first to prove a remarkable optimization result:
it is possible to construct a coupling in a \emph{maximal} way,
in the following sense.
Given two coupled copies \(X\), \(\widetilde{X}\) of a random process, let \(T\) be the (random) \emph{coupling time},
namely \(T=\inf\{t>0: X_s=\widetilde{X}_s \text{ for all } s\geq t\}\). 
Then a given coupling is \emph{maximal} if it simultaneously minimizes \(\Prob{T>t}\) for all \(t>0\);
in particular \(\Prob{T>t}\) is then given by the total variation distance between the distributions of \(X_t\) and \(\widetilde{X}_t\).
Thus a maximal coupling occurs at altogether the fastest possible rate.
Griffeath proved the existence of maximal couplings for discrete-time Markov chains.
Pitman \cite{Pitman-1976} gave an elegant explicit construction for homogeneous discrete-time Markov chains on a countable state-space case 
(and in fact his construction generalizes easily),
while Goldstein \cite{Goldstein-1979} extended the result to general discrete-time random processes. 
For further generalizations see, for example, \cite{SverchkovSmirnov-1990}.

The notion of coupling contains many subtleties. 
For example, in general
(and in contrast to the Markovian case described below)
the simpler random time \(\inf\{t>0: X_t=\widetilde{X}_t\}\) may fail to produce a coupling time; this relates to the notion of \emph{faithful coupling} \cite{Rosenthal-1997}
(note however that constructions of Pitman type deliver maximal couplings for which this simpler random time does produce a coupling).
We note in passing that one can relax the definition of coupling to allow for arbitrary time-shifts: this is the notion of \emph{shift-coupling} \cite{Thorisson-1994}.
In general it is hard to produce explicit constructions of maximal couplings.
On the other hand it is much easier to build \emph{Markovian couplings}: couplings which jointly produce a Markov process, whose transition probability kernel
has as marginals the transition probability kernels of the coupled diffusions.
There is an important detail here: the marginals must be transition probability kernels \emph{with respect to the natural filtration of the coupled process}.
A slightly more general notion of coupling, that of \emph{immersed coupling} (also called \emph{co-adapted coupling}) can be described succinctly and with more clarity:
the martingales for the filtrations of \(X\) and \(\widetilde{X}\) remain martingales for the joint filtration of \((X, \widetilde{X})\) \cite{Kendall-2013a},
so that the filtrations of \(X\) and \(\widetilde{X}\) are immersed in the joint filtration of \(X\) and \(\widetilde{X}\).
However in the following we will restrict ourselves to consideration of Markovian couplings.

Note that the above constructions beg the question of whether the coupling time \(T\) can be chosen to be almost-surely finite, in which case the coupling is said to be \emph{successful}.
In the case of the Kolmogorov diffusion it can be shown that successful Markovian couplings exist \cite{BenArousCranstonKendall-1995,KendallPrice-2004};
The extent to which this is the case for a general nilpotent diffusion is an interesting open question 
(but see \cite{BenArousCranstonKendall-1995,Kendall-2007,Kendall-2009d,Kendall-2013a}).

Markovian couplings are relatively easy to construct and verify (consider for example the classic reflection coupling of Brownian motion);
however in general they will not be maximal (the Brownian reflection coupling provides a rare exception). 
In the case of driftless Brownian motions on rather general spaces, Kuwada \cite{Kuwada-2009} showed that maximal Markovian couplings only occur in highly symmetrical cases; 
it is shown in \cite{BanerjeeKendall-2014} that if a smooth elliptic diffusion on a Riemannian manifold admits
a Markovian maximal coupling then the manifold must be a space form, the diffusion must be Brownian motion plus drift, and the drift must arise from a 
continuous one-parameter group of symmetries of the space form (possibly augmented by dilations in the Euclidean case). 
Thus (at least in the elliptic case) Markovian maximal couplings are very rare.


\subsection{Efficient couplings} \label{sec:efficient}
We have asserted that we should not expect maximal couplings to be readily constructable. 
But for practical purposes it will often suffice to obtain a coupling such that the probability of failing to couple by time \(t\) is {comparable}
(asymptotically in \(t\)) to the probability of failing to couple {maximally} by \(t\). 

Efficient couplings were introduced in \cite{BurdzyKendall-2000} for the case of Markov chains in which 
there is rapid convergence to a stationary distribution: 
a coupling is \emph{efficient} if the (presumed exponential) rate of coupling from generic initial states equals the (presumed exponential) rate of convergence to stationarity.
Here we generalize to cases where a stationary distribution need not exist: instead of considering exponential rates of convergence, we consider the rate of coupling compared to the maximum possible rate.
\begin{definition}\label{def:efficient}
Let $\mu_{x,y}$ be a
successful coupling of two Markov processes $X$ and $Y$ with state space $\mathcal{S}$.
Suppose that \(X\) and \(y\) start from distinct points $x,y \in \mathcal{S}$ respectively, with coupling time $\tau$, and let $d_t(x,y)$ denote the corresponding total variation distance between their distributions. 
We call $\mu_{x,y}$ an \textit{efficient coupling} if there exists a positive constant \(C(x,y)\) 
such that
 \begin{equation}\label{equation:efficient}
 \frac{\mu_{x,y}\{\tau>t\}}{d_t(x,y)} \quad\leq\quad  C(x,y) 
 \end{equation}
for all $t>0$ (note that \(d_t(x,y) \leq \mu_{x,y}\{\tau>t\}\) follows from Aldous' inequality).

We call the family of couplings $\{\mu_{x,y}: x,y \in \mathcal{S}\}$ an \textit{efficient coupling strategy} if $\mu_{x,y}$ is an efficient coupling for every pair of distinct starting points $x,y \in \mathcal{S}$.
\end{definition}
Note that the constant \(C(x,y)\) 
and the maximal coupling rate may, and often do, depend on initial conditions. 
Note also that it is entirely possible for a diffusion to exhibit efficient couplings from some distinct pairs of starting points, 
while still failing to possess an efficient coupling strategy from all possible distinct pairs of starting points.
(Examples can be constructed using diffusions which are ordered pairs of independent component diffusions, of which the first coordinate is efficient, and the second inefficient.)

Even when maximal or efficient Markovian couplings do not exist, it is still possible that there may be a Markovian coupling which is \emph{optimal},
in the sense of simultaneously minimizing \(\Prob{T>t}\) for all $t>0$ over the class of all Markovian couplings. 
Such a coupling exists in the case of simultaneous coupling of Brownian motion and its local time at zero \cite{Kendall-2013a}.

\subsection{Questions} \label{sec:questions}
The purpose of this paper is to address the following questions:
\begin{itemize}
 \item[Q1:] Is there a maximal Markovian coupling for the generalized Kolmogorov diffusion \(\Kol\)?
 The work of \cite{BanerjeeKendall-2014} is primarily concerned with smooth elliptic diffusions and its main results do not directly apply.)
 \item[Q2:] If not, is there an efficient Markovian coupling for the generalized Kolmogorov diffusion \(\Kol\)?
 \item[Q3:] Maximal couplings are typically hard to compute. 
 Is there an efficient non-Markovian coupling for the generalized Kolmogorov diffusion  \(\Kol\) which is relatively easy to describe?
\end{itemize}
We shall also address the question of whether there might be an optimal Markovian coupling, but only in the case of the classic Kolmogorov diffusion \((B, \int B\,{\d}t)\). 
We restrict here to the classic case;
indeed in the case of generalized Kolmogorov diffusions \(\Kol\) of index exceeding \(2\) we know only of implicit
and indirect constructions of Markovian couplings \cite{KendallPrice-2004}, 
while construction of a successful Markovian coupling at index \(2\) is direct but requires a somewhat involved analysis.

Section \ref{sec:intro} describes basic coupling concepts and gives an exact definition of the generalized Kolmogorov diffusion. 
Section \ref{sec:calcs} carries out some basic calculations for the generalized Kolmogorov diffusion,
from which is derived the straightforward Theorem \ref{thm:max-coupling}, which establishes the rate of maximal coupling.
Section \ref{sec:markov} establishes a sequence of negative results: for the generalized Kolmogorov diffusion it is not possible to construct a Markovian maximal coupling
(Theorem \ref{thm:noMarkKol}),
nor are there any efficient Markovian coupling strategies
(Theorem \ref{thm:notefficient}).
Moreover, for the classical Kolmogorov diffusion, Theorem
\ref{thm:noopt} 
(by way of the analytical Theorem \ref{thm:Kolrate} giving coupling rates for the coupling described in \cite{BenArousCranstonKendall-1995})
shows there can be no optimal Markovian strategy (we restrict here to the classical case of \((B, \int B\,{\d}t)\) to avoid considerable potential complexity).
Section \ref{sec:look-ahead} gives a more positive result in the form of Theorem \ref{thm:finite-look-ahead};
working in the class of non-Markovian couplings which look ahead only over bounded intervals of time,
this theorem exhibits a simple but efficient non-Markovian coupling strategy.
Finally, Section \ref{sec:conc} discusses possible future research directions.

\section{Explicit calculations for the Kolmogorov diffusion} \label{sec:calcs}
The Kolmogorov diffusion is a linear Gaussian diffusion, and therefore permits explicit calculation.
From \eqref{eqn:generalized-kolmogorov} the Kolmogorov diffusion \(\Kol(t)\) of index \(k\), using \(\xx\) as initial configuration,  
can usefully be expressed in terms of the standard Kolmogorov diffusion
in vector form:
\begin{equation}\label{eqn:vector}
 \Kol(t) \quad=\quad \widetilde{\Kol}(t) + \HH(t) \xx\,.
\end{equation}
Here the \((k+1)\times(k+1)\) lower-triangular matrix \(\HH(t)\) can be written as \(\HH(t)=\DD(t) \HH \DD(t^{-1})\), 
where \(\DD(t)\) is the \((k+1)\times(k+1)\) diagonal matrix with entries \(1\), \(t\), \ldots, \(t^k\) running down the diagonal, and
\begin{equation}\label{eqn:def-H}
 \HHi_{a,b} \quad=\quad 
 \begin{cases}
 \frac{1}{(a-b)!} & \text{ if } a\geq b\,,\\
  0 & \text{otherwise.}
 \end{cases}
\end{equation}
Note that \(\HH(t+s)=\HH(t)\HH(s)\). Note also that \(\det{\HH}=1\), so that \(\HH\) and \(\HH(t)\) (for \(t>0\)) are non-singular matrices. 

Consider the Kolmogorov diffusion \(\widetilde{\Kol}\) of index \(k\) and begun at \(\origin\).
Mathematical induction establishes the following linear relationship of Volterra integral type:
\[
 \widetilde{\Koli}_k(t) \quad=\quad \int_0^t \frac{(t-s)^{k-1}}{(k-1)!}B(s) \,{\d}{s} \qquad \text{ for }k>0\,.
\]
Fixing \(T\), and defining \(F(T-t)=(T-t)^k/k!\) for \(k>0\), an application of \Ito's formula to \(F(T-t)B(t)\) for standard Brownian motion \(B\) shows that
\[
 \widetilde{\Koli}_k(T) \quad=\quad \int_0^T \frac{(T-t)^{k}}{k!}\d B(t)\,.
\]
In fact this holds for all \(k\geq0\).
Applying \(L^2\) isometry for \Ito{} integrals, we find that for all \(a,b\geq0\)
\[
 \Expect{\widetilde{\Koli}_a(T) \widetilde{\Koli}_b(T)}\quad=\quad \int_0^T \frac{(T-t)^{a}}{a!}\frac{(T-t)^{b}}{b!}\,{\d}{t}\quad=\quad \binom{a+b}{a}\frac{T^{a+b+1}}{(a+b+1)!}\,.
\]
Thus the variance-covariance matrix \(\VV(T)\) for \(\widetilde{\Kol}(T)\), equivalently \({\Kol}(T)\), is given by
\(\VV(T)=T \DD(T) \VV \DD(T)\) where
\begin{equation}\label{eq:var-covar}
 \VVi_{a,b} \quad=\quad \binom{a+b}{a}\frac{1}{(a+b+1)!}\qquad\text{ for }a,b\geq0\,.
\end{equation}
Note that \(\VV\) is non-singular: given a vector \(\avec\) of coefficients, the \(L^2\)-isometry implies
\begin{multline*}
\avec^\top\,\VV\,\avec\quad=\quad\Var{\int_0^1 \sum_{k=0}^n a_k \frac{(1-t)^{k}}{k!}\d B(t)} \\
\quad=\quad
\int_0^1 \left(\sum_{k=0}^n a_k \frac{(1-t)^{k}}{k!}\right)^2 \,{\d}{t}
\quad=\quad
\int_0^1 \left(\sum_{k=0}^n a_k \frac{t^{k}}{k!}\right)^2 \,{\d}{t}\,,
\end{multline*}
and this integral is zero only if  the polynomial \(\sum_{k=0}^n a_k \tfrac{t^{k}}{k!}\) vanishes identically, forcing \(\avec=\underline{0}\).
Thus \(\VV\) is symmetric positive-definite.

Consider the Cholesky decomposition of the symmetric matrix \(\VV\) (unique up to \(\pm\) sign, since \(\VV\) is positive-definite).
This provides a lower-triangular non-singular matrix \(\LL\)
such that
\begin{equation}\label{eq:LQ-sqrt}
 \VV \quad=\quad \LL \LL^\top\,.
\end{equation}
Applying \eqref{eq:var-covar}, \eqref{eq:LQ-sqrt}, and the lower-triangular nature of \(\LL\),
note for future use that the top row of \(\LL\) can be taken to be \((1,0,\ldots,0)\).
There follows a representation of the distribution of \(\Kol(T)\) in terms of a vector \(\WW\) filled with \(k+1\) independent standard Brownian motions: for fixed \(T\) we obtain
\begin{equation}\label{eq:representation}
 \Kol(T) \quad\stackrel{\mathcal{D}}{=}\quad \DD(T)\left( \HH \DD(T^{-1}) \xx + \LL \WW(T)\right)\,.
\end{equation}
(Note however that this equality in distribution cannot be translated into a sample-path equality as \(T\) varies.)

Suppose \(\Kol^{(1)}\), \(\Kol^{(2)}\) are begun at \(\xx^{(1)}\), \(\xx^{(2)}\) respectively. 
Then \eqref{eq:representation} can be used to compute the total variation distance between the Gaussian distributions \(\Law{\Kol^{(1)}(T)}\) and \(\Law{\Kol^{(2)}(T)}\).
These Gaussian distributions have the same variance-covariance matrix, and therefore the total variation distance is given by the following expression,
with \(\zz=\xx^{(1)}-\xx^{(2)}\):
\begin{multline}\label{eq:total-variation}
 \distTV\left(\Law{\Kol^{(1)}(T)}, \Law{\Kol^{(2)}(T)}\right)\;=\;
 \Prob{
 |N(0,1)| \;\le\;  \frac{1}{2\sqrt{T}} \|\LL^{-1}\HH\DD(T^{-1})\zz\|
 }\\
 \quad\leq\quad
 \frac{\|\LL^{-1}\HH\DD(T^{-1})\zz\|}{\sqrt{2\pi T}}
 \,.
\end{multline}
For large \(T\), the bound and the total variation distance are asymptotically equivalent.


The relationship between coupling, maximal coupling and total variation distance \cite{Griffeath-1975,Pitman-1976,Goldstein-1979} 
(in particular Aldous' inequality)
immediately establishes
sharp bounds on the coupling rate for Kolmogorov diffusions, with coupling rate depending on the extent of agreement between low-index initial conditions.
\begin{theorem}[Maximal coupling rate for the generalized Kolmogorov diffusion]\label{thm:max-coupling}
If \(\Kol^{(1)}\), \(\Kol^{(2)}\) are coupled copies of the generalized Kolmogorov diffusion begun at \(\xx^{(1)}\), \(\xx^{(2)}\) respectively, with \(z_0=\ldots=z_{r-1}=0\) and \(z_r\neq0\)
for \(\zz=\xx^{(1)}-\xx^{(2)}\), and $\tau$ denotes the coupling time, then 
\[
 \Prob{\tau>T} \;\geq\; 
 \Prob{
 |N(0,1)| \;\le\;  \frac{1}{2\sqrt{T}} \|\LL^{-1} \HH \DD(T^{-1})\zz\| 
 }
 \;\sim\;
 O\left(\frac{1}{T^{r+\tfrac12}}\right),
\]
and this sharp lower bound is achieved by a maximal coupling between \(\Kol^{(1)}\) and \(\Kol^{(2)}\).
\end{theorem}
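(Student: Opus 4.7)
The plan is to recognise the theorem as essentially a direct corollary of the total variation computation \eqref{eq:total-variation} together with the standard existence theory for maximal couplings. First, I would obtain the lower bound by applying Aldous' inequality (recalled in Definition \ref{def:efficient}), $d_T(\xx^{(1)},\xx^{(2)})\leq\Prob{\tau>T}$, and substituting the explicit formula \eqref{eq:total-variation} on the left. This yields the stated probabilistic lower bound at once.

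Second, I would extract the $O(T^{-(r+1/2)})$ scaling by a short linear-algebraic estimate. Since $\DD(T^{-1})$ is diagonal with entries $T^{-i}$ and the first $r$ coordinates of $\zz$ vanish while $z_r\neq 0$, one has $T^{r}\DD(T^{-1})\zz \to z_r\,\ee_r$ as $T\to\infty$, where $\ee_r$ denotes the standard basis vector with a $1$ in position $r$ (using the same $0,1,\ldots,k$ indexing as for $\Kol$). The matrices $\HH$ and $\LL^{-1}$ are constant in $T$ and invertible (the first because $\det\HH=1$; the second because $\VV=\LL\LL^\top$ is positive-definite), so
\[
 T^{r}\,\LL^{-1}\HH\DD(T^{-1})\zz \;\longrightarrow\; z_r\,\LL^{-1}\HH\,\ee_r \;\neq\; \underline{0}\,.
\]
Consequently $\|\LL^{-1}\HH\DD(T^{-1})\zz\|\sim C\,T^{-r}$ for some constant $C>0$, and combining this with the prefactor $(2\sqrt{T})^{-1}$ and the small-argument tail asymptotic $\Prob{|N(0,1)|\leq x}\sim x\sqrt{2/\pi}$ produces the claimed $T^{-(r+1/2)}$ order.

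Finally, for sharpness I would appeal to the existence of maximal couplings for Markov processes (Griffeath, Pitman, Goldstein, and the continuous-time extension in \cite{SverchkovSmirnov-1990}, all recalled in Section \ref{sec:maximal}). By definition such a coupling simultaneously minimises $\Prob{\tau>T}$ over all couplings and for all $T>0$, attaining equality in Aldous' inequality and hence throughout the display above. There is no genuinely hard step here: the only point requiring care is the non-degeneracy of the leading coefficient $\LL^{-1}\HH\,\ee_r$, and this is immediate from the invertibility already noted.
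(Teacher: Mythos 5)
Your proposal is correct and follows essentially the same route as the paper: the lower bound comes from Aldous' inequality applied to the total variation formula \eqref{eq:total-variation}, the $T^{-(r+\frac12)}$ order from the fact that the vanishing of the first $r$ coordinates of $\zz$ forces $\|\LL^{-1}\HH\DD(T^{-1})\zz\|=O(T^{-r})$ together with the elementary small-argument Gaussian tail bound, and sharpness from the classical existence of maximal couplings attaining the total variation bound. Your explicit identification of the limit $T^{r}\LL^{-1}\HH\DD(T^{-1})\zz\to z_r\,\LL^{-1}\HH\,\ee_r\neq\underline{0}$ is a slightly more detailed version of the paper's appeal to the lower-triangular, non-singular structure of $\LL^{-1}\HH$, but it is the same argument.
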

\begin{proof}
It is classical that the maximal coupling achieves the upper bound provided by total variation distance \cite{Griffeath-1975,Pitman-1976,Goldstein-1979}.
Hence this result follows directly from \eqref{eq:total-variation} and the lower-triangular nature of \(\HH\) and \(\LL\) and hence of \(\LL^{-1} \HH\).
 From \eqref{eq:total-variation}, \(\Prob{\tau>T}\) is controlled by
 \(\Prob{ |N(0,1)| \;\le\;  \tfrac{1}{2\sqrt{T}}  \|\LL^{-1} \HH \DD(T^{-1})\zz\|} \). 
 But if the first \(r\) indices of \(\zz\) vanish then \(\|\LL^{-1} \HH \DD(T^{-1})\zz\|=O(T^{-r})\),
 and the result follows from the na\"ive bound
 \[
 \sqrt{\frac{2}{\pi}} \; \ell\;e^{-\ell^2/2}
 \quad\leq\quad
 \frac{1}{\sqrt{2\pi}}  \int_{-\ell}^\ell \exp\left(-\frac{u^2}{2}\right)\d{u} 
 \quad\leq\quad 
 \sqrt{\frac{2}{\pi}} \; \ell \,.
 \]
\end{proof}

This has implications for efficient (albeit possibly non-Markovian) coupling strategies for the generalized Kolmogorov diffusion: 
coupling will occur much faster if the initial states give the same initial Brownian locations and agree up to the first \(r-1\geq0\) iterated integrals.


\section{Markovian couplings of Kolmogorov diffusions} \label{sec:markov}
In this section we consider the rate of coupling for \emph{Markovian} couplings of Kolmogorov diffusions:
can Markovian couplings be maximal in this case? or efficient? or, failing either of these, 
can there be Markovian maximal couplings of Kolmogorov diffusions which are optimal in the sense of coupling faster than any other Markovian couplings?
We shall see that the answers to the first two of these questions are negative for Kolmogorov diffusions of whatever positive index.
For the third question we shall consider only classic (index one) Kolmogorov diffusions, and show that 
at least in this case the answer is again negative.

\subsection{Markovian Kolmogorov couplings are never maximal}
For a very general class of Markov processes, if a Markovian maximal coupling exists for two copies of such a process started from distinct points, then this coupling must satisfy some specific properties. 
Maximal coupling has to occur at the space-time interface defined by equality of the transition probabilities of the two coupled diffusions.
If in addition the coupling is Markovian, then Varadhan asymptotics show that at a given time the interface must be of a specific form (a hyperplane, if the diffusive part of the diffusion is Brownian).
Both properties follow from `soft' arguments and do not depend explicitly on the specific process being considered. 
This is the content of Section 1.1 of \cite{BanerjeeKendall-2014} and the results there can be used to show the following:
\begin{theorem}\label{thm:noMarkKol}
There is no Markovian maximal coupling for the Kolmogorov diffusion of any positive index, started from any pair of distinct points.
\end{theorem}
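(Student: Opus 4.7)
The strategy is to reduce the claim to the two structural properties of Markovian maximal couplings isolated in Section~1.1 of \cite{BanerjeeKendall-2014}, and then to verify by direct calculation that they cannot simultaneously hold for $\Kol$. These properties are: (I) any maximal coupling must meet on the space-time \emph{mirror set}
\[
M_t \quad=\quad \bigl\{\zz \in \Reals^{k+1}\,:\, p_t(\xx^{(1)}, \zz) \;=\; p_t(\xx^{(2)}, \zz)\bigr\}\,,
\]
where $p_t$ is the transition density of $\Kol$; and (II) if additionally the coupling is Markovian, then Varadhan-type small-time asymptotics force the normal to $M_t$ at the meeting point to lie in the span of the noise vector fields of $\Kol$.

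The first concrete step is to compute $M_t$ explicitly. Representation \eqref{eq:representation} exhibits $\Kol^{(i)}(t)$ as a Gaussian vector with mean $\HH(t)\xx^{(i)}$ and a common covariance $\VV(t)$ that does not depend on $\xx^{(i)}$. Setting the two Gaussian densities equal and cancelling the quadratic terms (which share the common covariance), $M_t$ is an affine hyperplane with normal direction
\[
\nu_t \quad=\quad \VV(t)^{-1}\HH(t)\,\zz\,, \qquad \zz \;=\; \xx^{(1)} - \xx^{(2)} \;\neq\; \origin\,.
\]
The second concrete step is to show that $\nu_t$ cannot lie in the one-dimensional noise span $\Reals\,\ee_0$ for a whole interval of times $t$. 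Using $\VV(t) = t\,\DD(t)\VV\DD(t)$ and $\HH(t) = \DD(t)\HH\DD(t^{-1})$, the requirement $\nu_t \parallel \ee_0$ simplifies to requiring that $\HH\,\DD(t^{-1})\zz$ be proportional to the first column of $\VV$, whose entries $1/(a+1)!$ are fixed constants. Because $\HH$ is lower triangular with unit diagonal, the components of $\HH\,\DD(t^{-1})\zz$ are polynomials in $t^{-1}$ whose leading coefficients are determined by the nonzero components of $\zz$; a direct inspection then shows that the proportionality can hold for at most finitely many positive values of $t$. Since the coupling time of any successful Markovian maximal coupling takes values in an uncountable set with positive probability (by smoothness of the transition densities and Aldous' inequality), this rules out the required alignment for almost every meeting time, yielding the contradiction.

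The principal obstacle is precisely the issue flagged in question Q1 of Section~\ref{sec:questions}: the results of \cite{BanerjeeKendall-2014} are formulated for smooth elliptic diffusions, whereas $\Kol$ is only hypoelliptic. The only step truly sensitive to ellipticity is the Varadhan asymptotic in property~(II), for which the relevant short-time rate function is now sub-Riemannian and assigns infinite cost to infinitesimal displacements transverse to $\ee_0$. Fortunately the transition densities of $\Kol$ are fully explicit via \eqref{eq:representation}, and a direct examination of their small-time logarithmic asymptotics recovers the alignment conclusion without recourse to the elliptic-case heat kernel bounds. After this adaptation, the remainder of the argument reduces to the algebraic verifications sketched above.
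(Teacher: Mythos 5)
There is a genuine gap, and it sits exactly where you place the weight of the argument: your property (II). You assert that for a Markovian maximal coupling the normal to the mirror hyperplane $M_t$ must lie in the span of the noise vector fields, i.e.\ $\nu_t \parallel \ee_0$. This is not one of the structural properties established in Section~1.1 of \cite{BanerjeeKendall-2014} (for elliptic diffusions the noise span is the whole tangent space, so such a condition would be vacuous there), and you give no proof of it beyond an appeal to sub-Riemannian Varadhan asymptotics that is left entirely as a sketch. Worse, the condition points in the wrong direction. The constraint that actually follows from Markovianity plus maximality is a \emph{consistency} property (Lemma~3 of \cite{BanerjeeKendall-2014}): for a.e.\ pair $(\xx^+,\xx^-)$ in the support of the time-$t$ law, the forward processes must again be maximally coupled, so the hyperplane $\mathcal{H}(t+s;\xx_0^+,\xx_0^-)$ must coincide with $\mathcal{H}(s;\xx^+,\xx^-)$. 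Letting $s\downarrow 0$ and using the explicit normal $\DD(s^{-1})\VV^{-1}\HH\DD(s^{-1})(\xx^+-\xx^-)$, the entry of $\VV(s)^{-1}$ that blows up fastest is the $(k,k)$ one (variance of the highest iterated integral is $O(s^{2k+1})$), so the limiting normal is parallel to $\ee_k=(0,\ldots,0,1)^\top$ --- the direction of the $k$-th iterated integral, not the Brownian direction $\ee_0$. The paper then forces $\nu_t\parallel\ee_k$ for all $t>0$ and shows by comparing coefficients of powers of $t^{-1}$ that this compels $\zz=\origin$, a contradiction. Your second step, showing $\nu_t\parallel\ee_0$ fails for all but finitely many $t$, is therefore an incompatibility with a condition that was never established.

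Two further omissions compound this. First, property (I) alone imposes no alignment condition on $\nu_t$ at any single time --- it only says the residual masses live on the two half-spaces --- so without the forward-consistency step your argument has no source for the constraint you are trying to contradict; the "uncountably many coupling times" observation does not substitute for it. Second, the small-time limit computation degenerates when the last coordinate of $\zz$ vanishes, so a separate argument is needed in that case: the paper runs the coupled pair until the last coordinates differ with positive probability and then invokes Lemma~3 of \cite{BanerjeeKendall-2014} to restart the argument from a pair of points where they do differ. Your sketch does not address this case. Your computation of the mirror hyperplane and its normal $\VV(t)^{-1}\HH(t)\zz$ is correct and matches the paper, but the core of the proof --- why any alignment condition must hold, and which direction it singles out --- needs to be rebuilt along the lines above.
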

\begin{proof}
Consider the implications of the existence of a Markovian maximal coupling between two index \(k\) generalized Kolmogorov diffusions
\(\Kol^+\) and \(\Kol^-\),
begun at different starting points, \(\xx_0^+\) and \(\xx_0^-\) respectively.
Suppose that the coupling time is \(\tau\).

First consider the case where the last coordinates of \(\xx_0^+\) and \(\xx_0^-\) differ.
By linearity, we may suppose that \(\xx_0^++\xx_0^-=\origin\) (the zero vector).
By \eqref{eq:representation}, for any fixed \(t>0\),
 \[
  \Kol^\pm(t) 
  \quad\stackrel{\mathcal{D}}{=}\quad \DD(t)\left(\LL \WW^\pm(t) + \HH \DD(t^{-1}) \xx_0^{\pm} \right)\,.
 \]
Here \(\WW^\pm\) are \((k+1)\)-dimensional Brownian motions which are coupled (not necessarily in a Markovian manner). 
The Gaussian distributions of \(\Kol^\pm(t)\) have the same non-singular variance matrix \(t\VV(t)=t\DD(t)\VV\DD(t)\):
the corresponding probability densities agree on a hyperplane \(\mathcal{H}(t;\xx_0^+,\xx_0^-)\) which runs through \(\origin\)
and is orthogonal to the vector given by the vector expression
\[
 (t\DD(t)\VV\DD(t))^{-1} \DD(t)\HH\DD(t^{-1})\zz
 \quad=\quad
 t^{-1}\DD(t^{-1}) \VV^{-1}\HH \DD(t^{-1})\zz\,,
\]
where \(\zz=\xx_0^+-\xx_0^-\).
Note that invertibility of \(\VV\) and \(\HH\) imply that this vector is non-zero.

It is convenient to scale this vector expression by \(t^{1+2k}\), and so to deduce that the hyperplane \(\mathcal{H}(t;\xx_0^+,\xx_0^-)\) is normal to the vector
\begin{equation}\label{equation:normal}
 (t^k{\DD}(t^{-1})) \; \VV^{-1}\HH \; (t^k{\DD}(t^{-1}))\zz\,,
\end{equation}
where \(t^k{\DD}(t^{-1})\) is a diagonal \((k+1)\times(k+1)\) matrix whose diagonal is composed of \(t^k\), \(t^{k-1}\), \ldots, \(t\), \(1\).

The hyperplane \(\mathcal{H}(t;\xx_0^+,\xx_0^-)\) separates $\mathbb{R}^{k+1}$ into disjoint half-spaces, one containing $\Kol^+(0)$ and the other containing $\Kol^-(0)$. We call the respective half spaces \(\mathcal{H}^+(t;\xx_0^+,\xx_0^-)\) and \(\mathcal{H}^-(t;\xx_0^+,\xx_0^-)\). Now the following observations must necessarily hold for a Markovian maximal coupling $\mu$ of \(\Kol^\pm\) with coupling time $\tau$:
\begin{enumerate}
\item Denote the probability densities corresponding to $\Kol^\pm(t)$ by $p^\pm_t(\xx_0^\pm,\cdot)$. Let $\alpha_t(\cdot)=p^+_t(\xx_0^+,\cdot)-p^-_t(\xx_0^-,\cdot)$.
Then \cite[Lemma 2]{BanerjeeKendall-2014} states that, for any Borel measurable set $A \in \mathbb{R}^{k+1}$,
\begin{eqnarray*}
\mu\{\Kol^\pm(t) \in A, \tau > t\} \quad=\quad \int_A\alpha^\pm_t(\zz) \ \d \zz
\end{eqnarray*}
where $\alpha^+(\zz)=\max\{p^+_t(\xx_0^+,\zz)-p^-_t(\xx_0^-,\zz),0\}$ and $\alpha^-(\zz)=\max\{p^-_t(\xx_0^-,\zz)-p^+_t(\xx_0^+,\zz),0\}$.
\linebreak
Thus, if coupling has not been successful by time \(t\), then at time \(t\) the 
diffusions \(\Kol^\pm(t)\) must lie on different sides of the hyperplane \(\mathcal{H}(t;\xx_0^+,\xx_0^-)\).
Moreover we may use the Gaussian nature of \(\Kol^\pm(t)\) to deduce 
that under the conditioning \(\tau>t\) the support of $\Kol^+(t)$ (respectively $\Kol^-(t)$) must be the whole of \(\mathcal{H}^+(t;\xx_0^+,\xx_0^-)\) 
(respectively \(\mathcal{H}^-(t;\xx_0^+,\xx_0^-)\)).
\item Let \(\mu_t\) denote the joint law of the coupled \(\Kol^\pm\) evaluated at time \(t\).
The coupling is Markovian and maximal, so \cite[Lemma 3]{BanerjeeKendall-2014} shows that
for almost every pair \((\xx^+,\xx^-)\) of distinct points in the support of \(\mu_t\) it must be the case that the forward processes \(\Kol^\pm(t+\cdot)\)
must generate a new Markovian maximal coupling starting from \(\xx^+\) and \(\xx^-\) respectively. We will denote the set of such \((\xx^+,\xx^-)\) by \(\mathcal{M}(\mu_t)\).
\end{enumerate}

The first observation shows that, for any positive \(t\), \(s\), when conditioned 
on \(\tau>t+s\),
the support of the conditional distribution of \(\Kol^+(t+s)\) must be the whole of 
 \(\mathcal{H}^+(t+s;\xx_0^+,\xx_0^-)\).
Adding the second observation, we may deduce that for all \((\xx^+,\xx^-)\in\mathcal{M}(\mu_t)\), when conditioning on
\(\Kol^+(t)=\xx^+\) and \(\Kol^-(t)=\xx^-\), then the support of the conditional distribution of \(\Kol^+(t+s)\) (also given \(\tau>t+s\)) must be the whole of  \(\mathcal{H}^+(s;\xx^+,\xx^-)\),
where \(\mathcal{H}(s;\xx^+,\xx^-)\) is the hyperplane on which the densities at time \(s\) agree for two generalized Kolmogorov diffusions begun
at \(\xx^+\) and \(\xx^-\) respectively.
Thus for \((\xx^+,\xx^-)\in\mathcal{M}(\mu_t)\) we must have 
 \(\mathcal{H}^+(t+s;\xx_0^+,\xx_0^-)=\mathcal{H}^+(s;\xx^+,\xx^-)\).

Arguing as above, the common hyperplane must be normal to the vector
\begin{multline*}
 (s^k{\DD}(s^{-1}))\; \VV^{-1}\HH\; (s^k{\DD}(s^{-1}))(\xx^+-\xx^-)
\\
\quad=\quad
 (s^k{\DD}(s^{-1}))\; \VV^{-1} \; (s^k{\DD}(s^{-1})) \times (s^{-k}{\DD}(s))\;\HH\; (s^k{\DD}(s^{-1}))(\xx^+-\xx^-)
 \,.
\end{multline*}
Now consider the limit of this vector as \(s\downarrow 0\).
We see that \((s^k{\DD}(s^{-1}))\; \VV^{-1} \; (s^k{\DD}(s^{-1}))\) converges to a matrix all of whose entries are zero save for 
the \((k,k)\) position.
Moreover this entry is that same as the \((k,k)\) entry of \(\VV^{-1}\): 
since \(\VV\) is symmetric positive-definite, it follows that \(\VV^{-1}\) is also symmetric positive-definite, and therefore its \((k,k)\) entry must be positive.

On the other hand, consider 
the lower-triangular matrix \((s^{-k}{\DD}(s))\;\HH\; (s^k{\DD}(s^{-1}))\).
All off-diagonal entries 
must tend to zero with \(s\); the on-diagonal entries are not affected and, by \eqref{eqn:def-H}, all are equal to \(1\).

These facts, along with the assumption that the last coordinate of $\zz$ is non-zero, imply
that as \(s\downarrow0\) so \((s^k{\DD}(s^{-1}))\; \VV^{-1}\HH \;(s^k{\DD}(s^{-1}))(\xx^+-\xx^-)\) converges to a vector parallel to \((0, 0, \ldots, 0, 1)^\top\),
and this vector is non-zero if \(s>0\).
Consequently
\(\mathcal{H}(t;\xx_0^+,\xx_0^-)\) is normal to \(\ee_k=(0, 0, \ldots, 0, 1)^\top\) for all positive \(t\).

Together with \eqref{equation:normal}, this tells us that, for each $t>0$, 
there is a non-zero scalar $\lambda_t$ such that 
$$
(t^k{\DD}(t^{-1}))\; \VV^{-1} \; (t^k{\DD}(t^{-1})) \times (t^{-k}{\DD}(t))\;\HH\; (t^k{\DD}(t^{-1}))\zz\quad=\quad\lambda_t \ee_k 
$$ 
or equivalently
\begin{equation}\label{equation:maxeqnone}
(t^{-k}{\DD}(t))\;\HH\; (t^k{\DD}(t^{-1}))\zz\quad=\quad\lambda_t(t^{-k}{\DD}(t))\; \VV \; (t^{-k}{\DD}(t))\ee_k\,.
\end{equation}
For any $0 \le i \le k$, (\ref{equation:maxeqnone}) yields
\begin{equation}\label{equation:maxeqntwo}
\sum_{j=0}^i t^{i-j}H_{i,j}z_j\quad=\quad t^{-k+i}V_{i,k}\lambda_t\,.
\end{equation}
Putting $i=k$ in (\ref{equation:maxeqntwo}), we find
$$
\lambda_t\quad=\quad V_{k,k}^{-1}\sum_{j=0}^k t^{k-j}H_{k,j}z_j\,.
$$ 
Substituting this value of $\lambda_t$ back into (\ref{equation:maxeqntwo}), we obtain the following equation:
$$
\sum_{j=0}^i t^{i-j}H_{i,j}z_j\quad=\quad \frac{V_{i,k}}{V_{k,k}}\sum_{j=0}^k t^{i-j}H_{k,j}z_j\,.
$$
Setting $i=0$ in the above equation and comparing coefficients of inverse powers of $t$, 
the explicit formulae \eqref{eqn:def-H} for $\HH$ and \eqref{eq:var-covar} for $\VV$ lead to $\zz=0$,
and we thus obtain a contradiction of the initial hypothesis that \(z_k\neq0\).

If the last coordinates of \(\xx_0^+\) and \(\xx_0^-\) agree, then consider the largest $j<k$ such that $\Koli^+_j(0) \neq \Koli^-_j(0)$. 
Without loss of generality, suppose $\Koli^+_j(0) > \Koli^-_j(0)$. 
Path continuity of the diffusion shows that there must be $T>0$ such that the measure $\mu\{\Koli^+_j(t) - \Koli^-_j(t)>0 \text{ for all } t\le T\}$ must be positive. 
But
$$
\Koli^+_k(t)-\Koli^-_k(t)\quad=\quad\int_0^{t}\int_0^{s_{k-1}}\cdots\int_0^{s_{j+1}}\left(\Koli^+_j(s_j) - \Koli^-_j(s_j)\right){\d}s_j\dots {\d}s_{k-1}\,,
$$
and therefore $\mu\{\Koli^+_k(t) - \Koli^-_k(t)>0 \text{ for all } 0<t\le T\}>0$.

In particular, $\mu_T\{\Koli^+_k(T) \neq \Koli^-_k(T)\}>0.$ 
Under the hypothesis of existence of a Markovian maximal coupling starting from  \(\xx_0^+\) and \(\xx_0^-\),
and using \cite[Lemma 3]{BanerjeeKendall-2014},
we can find $(\xx^+_T, \xx^-_T) \in \mathcal{M}(\mu_T)$ such that the coupled forward processes
$(\Koli^+_k(T+\cdot), \Koli^-_k(T+\cdot))$ started from $(\xx^+_T, \xx^-_T)$ create a Markovian maximal coupling. 
The previous argument applied to this coupling then leads to a contradiction.
\end{proof}

\subsection{Markovian Kolmogorov couplings are never efficient}
As before, let $d_t(\xx,\yy)$ denote the total variation distance between the laws of generalized Kolmogorov diffusions 
(of the same positive index \(k\)) started from $\xx$ and $\yy$ respectively. 
From Theorem \ref{thm:max-coupling}, if the first coordinates of $\xx$ and $\yy$ disagree then $d_t(\xx,\yy)  \sim t^{-1/2}$,
while if they agree then $d_t(\xx,\yy)  \sim t^{-3/2}$ or even higher powers of \(t^{-1}\) (if further coordinates agree).
That is to say, \textit{for efficient (possibly non-Markovian) coupling strategies (see Definition \ref{def:efficient}), 
the coupling happens much faster when the scalar Brownian motions, driving the coupled Kolmogorov diffusions of index \(k\), both start from the same point}. 
This turns out to be the key observation in proving Theorem \ref{thm:notefficient}. 
The theorem, on the absence of efficient Markovian coupling strategies,
will follow as a direct corollary of the following lemma.
\begin{lem}\label{lem:TVlbound}
Let $\mu$ be any successful Markovian coupling of Kolmogorov diffusions starting from distinct points $\xx$ and $\yy$ whose first coordinates agree. Let $\tau$ denote the coupling time. Then there are positive constants $C_{\mu}, t_{\mu}$ (possibly depending on the coupling $\mu$) such that, for all $t \ge t_{\mu}$,
\begin{equation}\label{equation:noteffeq}
\mu\{\tau>t\} \quad\ge\quad \frac{C_{\mu}}{\sqrt{t}}\,.
\end{equation}
\end{lem}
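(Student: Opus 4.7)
The plan is to exploit the Markovian (equivalently, immersed) property of \(\mu\) to reduce the required lower bound to the coupling rate of the induced coupling of the first (Brownian) coordinates alone, for which a \(t^{-1/2}\) lower bound follows directly from Aldous' inequality. Write \(B^\pm\) for the driving Brownian motions and set \(Y(t) = B^+(t) - B^-(t)\). Because \(\mu\) is Markovian, both \(B^\pm\) remain Brownian motions in the joint filtration, so \(Y\) is a continuous martingale with \(Y(0)=0\) by the hypothesis that the first coordinates of \(\xx\) and \(\yy\) agree.

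The first and most delicate step is to show that \(Y\) cannot be identically zero almost surely. Were \(Y \equiv 0\), then \(B^+ = B^-\), hence \(\widetilde{\Koli}^+_r = \widetilde{\Koli}^-_r\) for every \(r\), and by \eqref{eqn:generalized-kolmogorov} each coordinate difference \(\Koli^+_r(t) - \Koli^-_r(t)\) would reduce to a deterministic polynomial in \(t\). Taking \(r\) to be the smallest index with \(x_r \neq y_r\) (such \(r\) exists because \(\xx \neq \yy\) but \(x_0 = y_0\)), this polynomial collapses to the non-zero constant \(x_r - y_r\), so \(\Koli^+_r(t) \neq \Koli^-_r(t)\) for every \(t\), contradicting success of the coupling. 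Consequently there exists some \(s_0 > 0\) with \(\Expect{|Y(s_0)|} > 0\).

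Next I would apply the Markov property at time \(s_0\). Writing \(\tau_B\) for the coupling time of the induced Markovian coupling of \(B^\pm\), certainly \(\{\tau > t\} \supseteq \{\tau_B > t\}\). Applying Aldous' inequality to the conditional Markovian coupling of Brownian motions started from \((B^+(s_0), B^-(s_0))\) gives
\begin{equation*}
 \mu\{\tau_B > s_0 + t \mid \mathcal{F}_{s_0}\}
 \;\ge\;
 \Prob{|N(0,1)| \le \tfrac{|Y(s_0)|}{2\sqrt{t}}}
 \;\ge\;
 \sqrt{\tfrac{2}{\pi}}\,\tfrac{|Y(s_0)|}{2\sqrt{t}}\,e^{-|Y(s_0)|^2/(8t)}\,,
\end{equation*}
where the second inequality is the elementary Gaussian lower bound cited in the proof of Theorem \ref{thm:max-coupling}. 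Taking expectations (dominated convergence applies because \(|Y(s_0)| \le |B^+(s_0)| + |B^-(s_0)|\) is integrable) and letting \(t \to \infty\) yields
\begin{equation*}
 \sqrt{t}\,\mu\{\tau > s_0 + t\}
 \;\ge\;
 \tfrac{1}{\sqrt{2\pi}}\,\Expect{|Y(s_0)|\,e^{-|Y(s_0)|^2/(8t)}}
 \;\longrightarrow\;
 \tfrac{1}{\sqrt{2\pi}}\,\Expect{|Y(s_0)|}
 \;>\;0\,.
\end{equation*}
Replacing \(s_0 + t\) by \(t\) and choosing \(t_\mu\) large enough then delivers \eqref{equation:noteffeq}. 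The main obstacle is the non-triviality of \(Y\) established in the second paragraph: it is the only place where the specific algebraic structure of the Kolmogorov diffusion enters, essentially via the observation that synchronous coupling of the driving Brownian motions would leave the higher coordinates stuck at a fixed non-zero offset.
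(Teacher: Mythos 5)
Your proposal is correct and follows essentially the same route as the paper: rule out synchronous coupling of the driving Brownian motions (via the observation that the coordinate differences would then stay at a fixed non-zero value), find a time $s_0$ with $\Expect{|B^+(s_0)-B^-(s_0)|}>0$, and then condition at $s_0$ and apply Aldous' inequality to the induced Brownian coupling to extract the $t^{-1/2}$ lower bound. The only differences are cosmetic (you use a Gaussian lower bound with dominated convergence where the paper truncates to the event $|B(t_0)-\tilde B(t_0)|\le 2\sqrt{t-t_0}$, and your step from ``$Y$ not a.s.\ identically zero'' to the existence of $s_0$ tacitly uses the same Fubini-plus-path-continuity argument the paper spells out).
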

\begin{proof}
Let \(\Kol^{(1)}\) and \(\Kol^{(2)}\) denote specific Markovian-coupled copies of the generalized Kolmogorov diffusions
starting from $\xx$ and $\yy$ (with first coordinates agreeing but second coordinates disagreeing). 
This corresponds to a Markovian coupling of the driving Brownian motions \(B\) and \(\widetilde{B}\).
Let $\mathcal{F}_t=\sigma\{\Kol^{(1)}(s), \Kol^{(2)}(s)\;:\; s\leq t\}$ be the corresponding \(\sigma\)-algebra of events determined by time \(t\).
Consider the possibility that $\mu\{B(t)=\tilde{B}(t)\}=1$ for all $t$.
A Fubini argument then shows that 
\begin{equation*}
\mu\{B(t)=\tilde{B}(t) \text{ for almost every } t>0\}\quad=\quad1\,.
\end{equation*}
By path continuity of Brownian motion, the above would imply that \(B\) and \(\widetilde{B}\) are synchronously coupled $\mu$-almost surely and hence $\mu\{\Kol^{(1)}(t)-\Kol^{(2)}(t)=\xx-\yy \text{ for all} \ t>0\}=1$. 
So a successful Markovian coupling cannot be obtained in this manner. 
Thus for a successful Markovian coupling \(\mu\) there must exist $t_0>0$ such that $\mu\{B(t_0)\neq \tilde{B}(t_0)\}>0$. Now, for every $t>t_0$, 
$$
\mu\left\{\tau>t\right\} \quad=\quad \mathbb{E}_{\mu}\left[\;\mathbb{E}_{\mu}\left[\mathbb{I}(\tau>t) \mid \mathcal{F}_{t_0}\right]\;\right]\,,
$$ 
where $\mathbb{E}_{\mu}$ represents expectation with respect to the probability measure $\mu$. 
Introduce the ordinary Brownian coupling time $\tau^*=\inf\{s>t_0:B(s)=\tilde{B}(s)\}$. 
Evidently this happens no later than the Kolmogorov coupling time, so $\mathbb{I}(\tau>t) \ge \mathbb{I}(\tau^*>t)$. 
As the coupling is Markovian, 
the shifted process $\left((\theta_{t_0}B(s),\theta_{t_0}\tilde{B}(s)): s>0\right)$ 
(when conditioned on $\mathcal{F}_{t_0}$)
gives a coupling of Brownian motions starting from $(B(t_0), \tilde{B}(t_0))$. 
If $d^{\textsc{Br}}_t(x,y)$ represents the total variation distance between the distributions of Brownian motions starting from $x$ and $y$ at time $t$, 
then we know that for all $t>0$ and all $x,y$ satisfying $|x-y| \le 2\sqrt{t}$,
\begin{equation*}
d^{\textsc{Br}}_t(x,y) \quad=\quad \Prob{|N(0,1)| \quad\le\quad \frac{|x-y|}{2\sqrt{t}}}
\quad\ge\quad \frac{1}{\sqrt{2 \pi e}} \frac{|x-y|}{\sqrt{t}}\,.
\end{equation*}
There is $t_{\mu} > t_0$ such that, for all $t \ge t_{\mu}$,
$$
\mathbb{E}_{\mu}\left[|B(t_0)-\tilde{B}(t_0)| \;;\; |B(t_0)-\tilde{B}(t_0)|\le 2\sqrt{t-t_0}\right] \quad\ge\quad \sqrt{2 \pi e} C_{\mu}\,,
$$
where $C_{\mu}=\frac{1}{2\sqrt{2\pi e}}\mathbb{E}_{\mu}|B(t_0)-\tilde{B}(t_0)|$. (The left-hand side converges to \(2\sqrt{2\pi e}C_\mu\) as \(t\to\infty\).) 
Thus, for $t \ge t_{\mu}$,
 \begin{align*}
\mathbb{E}_{\mu}\left[\; \mathbb{E}_{\mu}\left[\mathbb{I}(\tau>t) \mid \mathcal{F}_{t_0}\right]\;\right] \quad&\ge\quad 
\mathbb{E}_{\mu}\left[\;\mathbb{E}_{\mu}\left[\mathbb{I}(\tau^*>t) \mid \mathcal{F}_{t_0}\right]\;\right]
 \quad\ge\quad \mathbb{E}_{\mu}\left[d_{t-t_0}^{\textsc{Br}}(B(t_0), \tilde{B}(t_0))\right]\\
 \quad&\ge\quad \frac{1}{\sqrt{2 \pi e}}\ \mathbb{E}_{\mu}\left[\frac{|B(t_0)-\tilde{B}(t_0)|}{\sqrt{t-t_0}} \;;\; \frac{|B(t_0)-\tilde{B}(t_0)|}{\sqrt{t-t_0}} \le 2\right] \\
 \quad&\ge\quad C_{\mu}\,(t-t_0)^{-1/2} 
 \quad\ge\quad C_{\mu}\,t^{-1/2}\,,
 \end{align*}
which proves the lemma. 
\end{proof}
 \begin{theorem}\label{thm:notefficient}
There does not exist any efficient Markovian coupling strategy (in the sense of Definition \ref{def:efficient}) for the Kolmogorov diffusion of index \(k\) (for \(k>0\)).
 \end{theorem}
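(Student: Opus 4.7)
The plan is to derive Theorem \ref{thm:notefficient} as an immediate consequence of Lemma \ref{lem:TVlbound} combined with Theorem \ref{thm:max-coupling}, by exploiting the mismatch between the polynomial rates of coupling and of total-variation decay when the starting points share a common first (Brownian) coordinate.

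First I would fix any candidate Markovian coupling strategy \(\{\mu_{\xx,\yy}\}\) on the Kolmogorov diffusion of index \(k>0\), and specialize to a pair of distinct starting points \(\xx\), \(\yy\) whose first coordinates agree: for example, \(\xx=\origin\) and \(\yy=\ee_1\), so that the coordinate discrepancy \(\zz=\xx-\yy\) has \(z_0=0\) but \(z_1\neq0\) (more generally, one just needs \(z_0=\ldots=z_{r-1}=0\) and \(z_r\neq 0\) for some \(r\geq 1\)). Lemma \ref{lem:TVlbound} then applies to the specific Markovian coupling \(\mu_{\xx,\yy}\), giving constants \(C_\mu,t_\mu>0\) such that
\[
\mu_{\xx,\yy}\{\tau>t\} \quad\geq\quad \frac{C_\mu}{\sqrt{t}}\qquad\text{for all }t\geq t_\mu\,.
\]

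On the other hand, since the first coordinates of \(\xx\) and \(\yy\) agree, Theorem \ref{thm:max-coupling} yields the matching upper bound \(d_t(\xx,\yy)=O(t^{-(r+1/2)})\) with \(r\geq 1\); in particular \(d_t(\xx,\yy)=O(t^{-3/2})\). Combining these two estimates gives
\[
\frac{\mu_{\xx,\yy}\{\tau>t\}}{d_t(\xx,\yy)} \quad\geq\quad C'_\mu\,t^{r}\,,
\]
which tends to infinity as \(t\to\infty\). This violates the bound required by Definition \ref{def:efficient}, so \(\mu_{\xx,\yy}\) cannot be an efficient coupling, and hence the strategy as a whole cannot be efficient.

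There is no real obstacle once Lemma \ref{lem:TVlbound} is in hand; the only mild subtlety is to remember that the definition of an efficient coupling \emph{strategy} requires efficiency at \emph{every} pair of distinct starting points, so it suffices to exhibit a single pair at which the ratio blows up. The argument is essentially a direct comparison of the \(t^{-1/2}\) lower bound forced on any Markovian coupling (which cannot improve on the rate at which the one-dimensional driving Brownian motions couple) with the strictly faster \(t^{-3/2}\) or better decay of the actual total variation distance when the Brownian starting points already coincide.
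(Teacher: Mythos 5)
Your argument is correct and is essentially identical to the paper's own proof: both take a pair of distinct starting points whose first (Brownian) coordinates agree, invoke Lemma \ref{lem:TVlbound} for the \(C_\mu t^{-1/2}\) lower bound on the failure probability, and compare it with the \(O(t^{-3/2})\) (or faster) decay of \(d_t(\xx,\yy)\) from Theorem \ref{thm:max-coupling} to see that the ratio in Definition \ref{def:efficient} is unbounded. No gaps.
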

\begin{proof}
We argue by contradiction.
If there is such an efficient Markovian coupling strategy, then there must exist an efficient Markovian coupling $\mu$  
of the generalized Kolmogorov diffusions
starting from $\xx$ and $\yy$ (with first coordinates agreeing but second coordinates disagreeing) with coupling time $\tau$. Then, by Lemma \ref{lem:TVlbound}, there exist positive constants $C_{\mu}, t_{\mu}$ such that $\mu\{\tau>t\} \ge C_{\mu}t^{-1/2}$ for all $t \ge t_{\mu}$.

But, as noted in the discussion preceding Lemma \ref{lem:TVlbound}, since the first coordinates of \(\xx\) and \(\yy\) agree we have
$$
d_t(\xx,\yy)  \quad\sim\quad t^{-3/2}\,,
$$ 
or even faster.
Thus efficiency of the coupling strategy (in the sense of Definition \ref{def:efficient}) must fail, proving the theorem.
\end{proof}

\begin{remark}
Theorem \ref{thm:notefficient} shows that any Markovian coupling strategy is non-efficient,
in the sense that there exist some pairs of distinct starting points from which it is impossible to construct efficient couplings. 
But it {does not imply} Theorem \ref{thm:noMarkKol}, which shows non-maximality of Markovian couplings from \textit{any pair of distinct starting points}.
\end{remark}

\subsection{Markovian classic Kolmogorov couplings cannot be optimal}
To begin with, recall the simplest version of the Markovian coupling for the classic Kolmogorov diffusion \cite{BenArousCranstonKendall-1995,KendallPrice-2004}.
Write \(U\) for the process corresponding to the difference between the Brownian motions, and \(V\) for the difference in the time integrals of the Brownian motions.
We assume $U_0=1$ and $V_0=0$; generalization to the case of arbitrary distinct starting points should be clear.
We write the coupling probability measure as $\hat{\mu}_{(u,v)}$ when starting values are $U_0=u$, $V_0=v$.

We describe the coupling strategy of \cite{BenArousCranstonKendall-1995,KendallPrice-2004} for \(\hat{\mu}_{(1,0)}\).
When \(U\) and \(V\) have the same sign, we apply reflection coupling (so that \(U\) evolves as a Brownian motion run at rate \(4\)). 
Thus the visits of \((U,V)\) to the axis \(V=0\) have to occur at isolated instants of time: between each pair of visits the particle \((U,V)\) describes a ``half-cycle'' about the origin.
Over the \(k^\text{th}\) cycle, we actually apply reflection coupling until \(U\) hits \(U=\pm2^{-k}\) or \(V=0\), 
taking the sign \(\pm\) as the opposite of the sign of \(U\) at the start of the half-cycle.
We then apply synchronous coupling till \(V=0\), 
so that \(U\) is held constant.
As a result,
\(|U|\) will be no larger than \(2^{-k}\) at the end of the \(k^\text{th}\) half-cycle.

It is convenient to introduce some notation.
Suppose that the \(k^\text{th}\) cycle begins at time \(S_{k-1}\) (so \(S_0=0\)).
Then \(S_k=\inf\{t>S_{k-1}: V(t)=0\}\).
Let
\(T_k=\min\{S_k, \inf\{t>S_{k-1}: U(t)=(-1)^k 2^{-k}\}\}\) be the first time that \(U\) hits \(U=(-1)^k2^{-k}\), or \(S_k\) if that happens first.
Thus \([T_k, S_k)\) is the part of the half-cycle in which synchronous coupling is applied: one might think of this as the ballistic phase,
while \([S_{k-1}, T_k)\) is the Brownian phase. Note that the ballistic phase will be trivial if the Brownian phase hits the \((V=0)\)-axis (that is, \(T_k=S_k\)).

The 
time of successful coupling is the time at which \((U,V)\) hits the origin, and thus it is given by
\[
\tau \quad=\quad \lim_{k \to \infty} S_k \quad=\quad \sum_{k=0}^{\infty}(S_{k+1}-S_k)\,.
\]
Borel-Cantelli arguments show that this limit is finite \cite{BenArousCranstonKendall-1995}. 
Our first task is to determine the precise rate of decay of the probability of failing to couple by time $t$. 


\begin{theorem}\label{thm:Kolrate}
Under the coupling $\hat{\mu}_{(1,0)}$ described above, the coupling time $\tau$ satisfies 
\begin{equation}\label{eq:Kolrate}
\frac{C_1}{t^{1/3}} \quad\le\quad \hat{\mu}_{(1,0)}\left\{\tau>t\right\} \quad\le\quad \frac{C_2}{t^{1/3}} \qquad\text{ for } t\geq1\,,
\end{equation}
for some positive constants $C_1, C_2$.
\end{theorem}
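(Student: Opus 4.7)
The plan is to exploit the self-similar scaling of the pair \((U, V)\): since \(U\) is a Brownian motion (of rate \(4\)) and \(\d V = U\,\d t\), the map \((u, v, t) \mapsto (\lambda u, \lambda^3 v, \lambda^2 t)\) is a symmetry of the joint law. Combined with the geometric decay of the half-cycle thresholds \(\pm 2^{-k}\), this implies that the successive cycle increments \(\Delta S_k = S_k - S_{k-1}\) are stochastically dominated by \(4^{-(k-1)} \eta_k\), where the \(\eta_k\) are independent copies of the first cycle time \(\eta := S_1\) starting from \((U_0, V_0) = (1, 0)\); indeed, under the symmetry with \(\lambda = 2^{k-1}\), a cycle begun at \((2^{-(k-1)}, 0)\) with target \(\pm 2^{-k}\) is mapped onto a cycle begun at \((1, 0)\) with target \(\pm 1/2\) and time rescaled by a factor \(4^{k-1}\). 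A fractional-moment Markov argument applied to the geometric series \(\sum_k 4^{-(k-1)} \eta_k\) then shows that \(\Prob{\tau > t}\) has the same order as \(\Prob{\eta > t}\), so the theorem reduces to proving \(C_1\,t^{-1/3} \le \Prob{\eta > t} \le C_2\,t^{-1/3}\) for \(t \ge 1\).

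Within the first cycle I would decompose \(\eta = T_1 + 2V_*\), where \(T_1\) is the length of the Brownian (reflection) phase and \(V_* := V(T_1)^+\). During the ballistic phase \(U \equiv -1/2\), so \(V\) decreases at rate \(1/2\), giving ballistic length \(2V_*\). The upper bound on \(\Prob{\eta > t}\) splits as \(\Prob{T_1 > t/2} + \Prob{2V_* > t/2}\). The first summand is bounded by the probability that \(U\) has not hit \(-1/2\) by time \(t/2\), which is \(O(t^{-1/2})\) by the reflection principle and is therefore absorbed into \(O(t^{-1/3})\). For the second summand, use \(V_* \le \int_0^{T_1^U} U_s\,\d s\), where \(T_1^U\) is the unrestricted hitting time of \(-1/2\) by \(U\); Brownian scaling shows that \(\int_0^s U_r\,\d r\) is of typical size \(s^{3/2}\) with Gaussian fluctuations on that scale, so the dominant contribution to \(\{V_* > t/4\}\) comes from the event \(\{T_1^U \ge c\,t^{2/3}\}\), whose probability is \(O(t^{-1/3})\) by the same first-passage estimate.

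For the lower bound, I would construct an explicit event \(A_t\) of probability at least of order \(t^{-1/3}\) on which \(\eta > t\). A natural choice is to require that \(U\), starting from \(1\), first climbs to height \(c_1\,t^{1/3}\) before hitting \(-1/2\) (a gambler's ruin event of probability of order \(t^{-1/3}\)), then stays in the strip \([c_1\,t^{1/3}/2,\,2c_1\,t^{1/3}]\) over a subsequent time interval of length at least \(c_2\,t^{2/3}\) (a rescaled Brownian event of constant probability), and finally descends approximately monotonically to \(-1/2\) in an additional interval of length at most \(c_3\,t^{2/3}\). On \(A_t\), the integral \(V\) remains positive throughout the Brownian phase and accumulates to \(V(T_1) \ge c\,t\), so the ballistic phase alone has duration \(\ge 2c\,t\), forcing \(\eta > t\) for \(t\) sufficiently large. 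Strong Markov arguments combined with Brownian scaling then deliver \(\Prob{A_t} \ge c'\,t^{-1/3}\).

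The main technical obstacle is making rigorous the heuristic \(V_* \sim T_1^{3/2}\) in the presence of the conditioning that \(V\) not already have returned to \(0\) during the Brownian phase. For the upper bound this conditioning is harmless (it may be dropped without loss), while for the lower bound it is absorbed into the explicit construction of \(A_t\), which forces \(U\) to stay positive over the accumulation interval. An alternative route would exploit McKean's explicit joint density \cite{McKean-1963} for Brownian motion together with its time integral, permitting a direct density-level calculation.
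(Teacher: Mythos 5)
Your proposal follows the same skeleton as the paper's proof: decompose $\tau$ into the half-cycle increments $S_{k+1}-S_k$, use Brownian scaling to reduce everything to the tail of the first cycle time $S_1=T_1+2V(T_1)$, recognise $S_1$ (up to constants) as the area swept out by a Brownian motion started at a positive level up to its first zero, and show that this area has tail of exact order $t^{-1/3}$. Where you genuinely diverge is in the last step: the paper simply quotes the Kearney--Majumdar closed-form density \eqref{eq:KearneyMaj} for $\int_0^{\sigma_a}W\,\d s$, which immediately yields the two-sided bound \eqref{equation:Arhit} with explicit constants, whereas you propose to derive the $t^{-1/3}$ tail from scratch via the scaling heuristic $V_*\sim T_1^{3/2}$, using first-passage estimates for the upper bound and an explicit event $A_t$ (climb to height $\asymp t^{1/3}$, linger for time $\asymp t^{2/3}$) for the lower bound. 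Your route is more elementary and self-contained but produces unspecified constants; the paper's is shorter and explicit. Your remark that the complication of $V$ possibly returning to $0$ during the reflection phase is harmless for the upper bound and can be engineered away inside $A_t$ for the lower bound is correct, and is if anything handled more carefully than in the paper, which identifies $T_1$ with the hitting time $\inf\{t: \widetilde{W}_t=0\}$ without comment.

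Three steps need repair before this is a proof. First, the ``fractional-moment Markov argument'': since the tail of $S_1$ is exactly of order $t^{-1/3}$, one has $\Expect{S_1^{1/3}}=\infty$, so a fractional-moment bound can only give $\Prob{\tau>t}=O(t^{-1/3+\epsilon})$ and loses the sharp exponent. The correct elementary substitute is the paper's union bound: write $t=\sum_k c_k t$ for a fixed geometric sequence $(c_k)$ summing to $1$, bound $\Prob{\tau>t}\le\sum_k\Prob{S_{k+1}-S_k>c_k t}$, and use the scaling domination $S_{k+1}-S_k\preceq a_k S_1$ together with $\sum_k (a_k/c_k)^{1/3}<\infty$. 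Second, the inequality $V_*\le\int_0^{T_1^U}U_s\,\d s$ is not correct as written, because $U$ takes negative values on $[0,T_1^U]$ and the right-hand side need not dominate the running supremum of $V$; replace $U$ by $U+\tfrac12\ge0$ (this is exactly the paper's substitution $\tfrac12+U=2\widetilde{W}$ with $\widetilde{W}$ a standard Brownian motion from $\tfrac34$), at the harmless cost of an additive $T_1^U/2$ whose tail is $O(t^{-1/2})$. Third, in the upper bound for $\Prob{V_*>t/4}$ a single split at $T_1^U\le t^{2/3}$ does not close: at that critical scale the running maximum of $U$ exceeds $t^{1/3}$ with probability of constant order, so you need a dyadic decomposition over the value of $T_1^U$ (or an appeal to a known tail estimate for the area under Brownian motion before its first zero). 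With these repairs your argument goes through and recovers the theorem.
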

\begin{proof}
Note that scaling arguments show that \(S_{k+1}-S_k\) has the same distribution as \(2^{-k}S_1\), 
so that the principal computation concerns the rate of decay of \(\hat{\mu}_{(1,0)}\{S_1>t\}\). 

Now \(S_1=T_1 + 2V(T_1)=2\int_0^{T_1} (\tfrac{1}{2} + U(s)){\d}s\), since either \(V(T_1)=0\) 
(when \(T_1=S_1\)) or the velocity of \(V\) over \([T_1,S_1]\) is fixed at \(-\tfrac{1}{2}\). 
Writing \(\tfrac{1}{2}+U=2\widetilde{W}\) for a Brownian motion \(\widetilde{W}\) started from $\frac{3}{4}$ (over the time interval \([S_0, T_1]\)),
and noting that \(S_1=T_1+2 V(T_1)=4\int_0^{T_1}\widetilde{W}{\d}t\) (since $T_1=\inf\{t>0: \widetilde{W}_t=0\}$), we see that \(S_1/4\) can be viewed as 
having the density of the random area under a one-dimensional Brownian motion started from a positive level and measured till the first time it hits zero. 

The density for this random area can be obtained from the calculations of \cite[equation (12)]{KearneyMajumdar-2005}.
Consider a standard (rate \(1\)) Brownian motion \(W\) started from $a>0$.
If $\sigma_a= \inf\{t>0: W_t=0\}$,
then
the martingale \(\Expect{\exp(-\lambda\int_0^{\sigma_a} W(s){\d}{s})| \mathcal{F}_t}\) can be used to 
show that the moment generating function
of \(\int_0^{\sigma_a} W(s){\d}{s}\) (viewed as a function of the  starting position \(a\))
must solve
an Airy partial differential equation. The moment generating function can be inverted to yield the distribution of \(\int_0^{\sigma_a}W(s){\d}{s}\):
\begin{equation}\label{eq:KearneyMaj}
\Prob{\int_0^{\sigma_a}W(s){\d}{s}\in {\d}{u}}
\quad=\quad
\frac{2^{1/3}}{3^{2/3}\Gamma\left(\frac{1}{3}\right)}\frac{a}{u^{4/3}}\exp\left(-\frac{2a^3}{9u}\right) {\d}{u}\,.
\end{equation}
From \eqref{eq:KearneyMaj}, it follows that for $t \ge 1$,
\begin{equation}\label{equation:Arhit}
\,\frac{6^{1/3}e^{-\frac{2a^3}{9}}}{\Gamma\left(\frac{1}{3}\right)}\frac{1}{t^{1/3}} \quad\le\quad 
\Prob{\int_0^{\sigma_a}W(s){\d}{s}>t}\quad\le\quad \frac{6^{1/3}}{\Gamma\left(\frac{1}{3}\right)}\frac{1}{t^{1/3}}\,.
\end{equation}
As \(S_1/4\) has the density \eqref{eq:KearneyMaj} with \(a=\tfrac{3}{4}\), \eqref{equation:Arhit} gives us the following for $t \ge 1$:
\begin{equation}\label{equation:Stail}
\,\frac{24^{1/3}e^{-\frac{3}{32}}}{\Gamma\left(\frac{1}{3}\right)}\frac{1}{t^{1/3}} \quad\le\quad 
 \hat{\mu}_{(1,0)}\{S_1>t\} \quad\le\quad \frac{24^{1/3}}{\Gamma\left(\frac{1}{3}\right)}\frac{1}{t^{1/3}}\,.
\end{equation}
We now apply this rate computation to \(\tau=\sum_{k=0}^\infty(S_{k+1}-S_k)\).
By Boole's inequality, scaling, and \eqref{equation:Stail}, if \(t\geq1\) then
\begin{multline*}
 \hat{\mu}_{(1,0)}\left\{\sum_{k=0}^\infty(S_{k+1}-S_k) > t\right\}
 \quad\leq\quad 
 \sum_{k=0}^\infty \hat{\mu}_{(1,0)}\left\{S_{k+1}-S_k > \frac{\sqrt{2}-1}{\sqrt{2}}2^{-k/2}\;t\right\}\\
  \quad=\quad 
 \sum_{k=0}^\infty \hat{\mu}_{(1,0)}\left\{S_1 > \frac{\sqrt{2}-1}{\sqrt{2}}2^{k/2}\;t\right\}
 \quad\leq\quad
  \frac{24^{1/3}}{\Gamma\left(\frac{1}{3}\right)} \left(\frac{\sqrt{2}}{\sqrt{2}-1}\right)^{1/3}\frac{1}{1-2^{-1/6}}\;\frac{1}{t^{1/3}}\,,
\end{multline*}
and this establishes the right-hand inequality in \eqref{eq:Kolrate}. 
On the other hand \eqref{equation:Stail} shows that if \(t\geq1\) then
\begin{multline*}
 \hat{\mu}_{(1,0)}\left\{\sum_{k=0}^\infty(S_{k+1}-S_k) > t\right\}
 \quad\geq\quad 
 \hat{\mu}_{(1,0)}\left\{S_1 > t\right\}
 \quad\geq\quad
 \frac{24^{1/3}e^{-\frac{3}{32}}}{\Gamma\left(\frac{1}{3}\right)}\frac{1}{t^{1/3}} 
 \,,
\end{multline*}
and this establishes the left-hand inequality in \eqref{eq:Kolrate}, thus completing the proof. 
\end{proof}

The order of decay of the failure probability of coupling by time $t$ is $t^{-1/3}$, therefore
this Markovian coupling is far from efficient even when the Brownian motions start from different points. 
In principle similar analyses should be possible for Markovian couplings of generalized Kolmogorov diffusions, though in such cases
progress is difficult because we only know of implicitly defined Markovian coupling constructions for index exceeding \(2\),
while analysis for the case of index \(2\) is complicated \cite{KendallPrice-2004}.

We now consider the interesting question, can there be an \emph{optimal coupling rate}, optimizing over all Markovian couplings
but obtained by a single Markovian coupling of a Kolmogorov diffusion?
For a study of this question in the different context of coupling for Brownian motion together with a local time, see \cite{Kendall-2013a}. 
In contrast to the case of \cite{Kendall-2013a}, 
we will prove that
such a coupling cannot exist for the classical Kolmogorov diffusion.

\begin{theorem}\label{thm:noopt}
There does not exist an optimal Markovian coupling strategy for two copies of the classical Kolmogorov diffusion.
\end{theorem}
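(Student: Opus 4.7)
The plan is a proof by contradiction using the Markov property, the scaling symmetry of the classical Kolmogorov diffusion, and the BACK rate of Theorem~\ref{thm:Kolrate}. Suppose that an optimal Markovian coupling $\mu^*$ of two copies of the classical Kolmogorov diffusion exists; by translation we may work with the difference process $(U, V)$ of the two coupled diffusions, and by self-similarity of the Kolmogorov diffusion under $(U, V, t) \mapsto (\lambda U, \lambda^3 V, \lambda^2 t)$ we may take the starting difference to be $(U_0, V_0) = (1, 0)$.

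The first move is to apply the Markov property at the stopping time $\sigma = \inf\{s > 0 : U(s) = 0\}$, which is $\mu^*$-almost surely finite since successful coupling requires $U = 0$. Because $U_0 = 1 > 0$ and $V_0 = 0$, path continuity forces $V$ to be strictly positive on an initial time interval, so $\Prob{V(\sigma) \neq 0} > 0$. Conditional on $V(\sigma) = v \neq 0$, the Markov property together with the assumed optimality of $\mu^*$ implies that the continuation of $\mu^*$ from $(0, v)$ is itself an optimal Markovian coupling from $(0, v)$: otherwise, splicing in a strictly better continuation would give a Markovian coupling from $(1, 0)$ strictly beating $\mu^*$ at some $t$. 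By the scaling, its tail $g_v$ satisfies $g_v(t) = g_1(t / v^{2/3})$, and it thus suffices to derive a contradiction from the existence of an optimal Markovian coupling from $(0, 1)$.

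Next, I would construct a one-parameter family of successful Markovian couplings from $(0, 1)$: for each $\alpha \in (0, 1)$ let $\mu_\alpha$ first apply reflection coupling until $|U|$ first attains the threshold $\alpha$, then run the BACK half-cycle scheme of \cite{BenArousCranstonKendall-1995} with geometric level-sequence $\alpha, \alpha^2, \ldots$. Adapting the Airy-density computation underpinning Theorem~\ref{thm:Kolrate}, via the Kearney--Majumdar formula \eqref{eq:KearneyMaj}, one obtains $\mu_\alpha\{\tau > t\} \sim C(\alpha)\,t^{-1/3}$ with $C(\alpha)$ a non-constant continuous function of $\alpha$; consequently for suitable $\alpha_1 \neq \alpha_2$ the tail functions $t \mapsto \mu_{\alpha_i}\{\tau > t\}$ cross at some finite $t_0$, and the family's pointwise envelope $h(t) := \inf_\alpha \mu_\alpha\{\tau > t\}$ is not realised by any single $\mu_\alpha$.

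An optimal coupling from $(0, 1)$ would need to undercut every $\mu_\alpha$ pointwise in $t$, hence satisfy $g_1(t) \leq h(t)$ for every $t$; combined with the self-similar constraint $g_v(t) = g_1(t/v^{2/3})$ and a further application of the Markov property at the hitting time $\sigma' = \inf\{s : V(s) = 0\}$ (which plays for $(0, 1)$ the role $\sigma$ played for $(1, 0)$), this imposes a rigid coupled functional equation on $g_1$ and $T(t) := \mu^*\{\tau > t \mid (1, 0)\}$. The plan is then to show that any Markovian tail satisfying this functional equation is bounded below by the tail of some fixed $\mu_\alpha$, so cannot lie below the envelope $h$ --- the state-dependence (rather than time-dependence) of a Markovian coupling kernel is too restrictive to reproduce the time-varying trade-off that the envelope captures. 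The main obstacle is making this last step quantitative: one must exploit the precise form of the coupled functional equation to rule out not merely BACK-style couplings but any Markovian candidate, sharpening the Kearney--Majumdar calculation so as to pin down the leading and sub-leading constants $C(\alpha)$ and thereby demonstrate genuine crossing rather than mere asymptotic separation.
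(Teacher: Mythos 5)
There is a genuine gap, and you have in fact flagged it yourself: your final step --- showing that \emph{no} Markovian coupling can lie below the envelope $h(t)=\inf_\alpha \mu_\alpha\{\tau>t\}$ --- is exactly the hard part, and nothing in your plan closes it. Establishing that the tails of two members of your family $\mu_{\alpha_1},\mu_{\alpha_2}$ cross at some finite $t_0$ would only show that no member of \emph{that family} is optimal; it says nothing about an arbitrary Markovian candidate, which need not resemble a BACK-style half-cycle scheme at all. Worse, since all your $\mu_\alpha$ have tails of the same order $C(\alpha)t^{-1/3}$, one of them eventually dominates the others, so the envelope argument is fragile even within the family, and pinning down sub-leading constants in the Kearney--Majumdar computation is both delicate and ultimately insufficient for the universal statement you need.

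The paper avoids all of this by exploiting a mismatch of \emph{power laws} rather than of constants. Working from a starting difference $(U_0,V_0)=(0,1)$ (first coordinates agreeing), Lemma \ref{lem:TVlbound} already supplies a universal lower bound: \emph{any} successful Markovian coupling $\nu$ from such points satisfies $\nu\{\tau^*>t\}\ge C_\nu t^{-1/2}$ for all large $t$. Against this, the paper constructs for each target time $t$ a Markovian coupling $\mu^{(t)}$ --- the BACK coupling with its first reflection phase truncated when $U$ hits the level $-4/t$, followed by a synchronous phase and then the standard scheme from $(U_{S_1},0)$ --- and shows $\mu^{(t)}\{\tau^{(t)}>t\}\le C/t$ with $C$ independent of $t$. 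An optimal Markovian coupling would have to beat each $\mu^{(t)}$ at time $t$, hence have tail $\le C/t$, contradicting the $t^{-1/2}$ lower bound. The key ideas you are missing are (i) the universal $t^{-1/2}$ lower bound of Lemma \ref{lem:TVlbound}, which applies to every Markovian candidate at once and so replaces your unquantified ``rigidity'' step, and (ii) the idea of tuning the coupling to the target time $t$ (a $t$-indexed family, not an $\alpha$-indexed one), so that the comparison is between genuinely different decay exponents ($t^{-1}$ versus $t^{-1/2}$) and no constant-tracking is required.
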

\begin{proof}
Fix attention on classical Kolmogorov diffusions which initially differ only in their second coordinate.
Denote by $U$ the difference between the Brownian motions and by $V$ the difference in the time integrals. 
By scaling and stopping arguments, we may concentrate on the situation in which \(U_0=0\) and \(V_0=1\).
Our strategy will be to obtain for each $t>0$, a specific Markovian coupling $\mu^{(t)}$ with coupling time $\tau^{(t)}$ such that $\mu^{(t)}\{\tau^{(t)}>t\} \le \frac{C}{t}$ for all $t>0$, where the constant $C$ does not depend on $t$. 
If there were to exist an optimal Markovian coupling $\nu$ with coupling time $\tau^*$, 
then the above would imply that it should satisfy $\nu\{\tau^*>t\} \le \frac{C}{t}$ for all $t >0$.
This would contradict Lemma \ref{lem:TVlbound}, 
which shows that for any (successful) Markovian coupling $\nu$ there must exist positive constants $C_{\nu}, t_{\nu}$ such that for all $t \ge t_{\nu}$, $\nu\{\tau^*>t\} \ge C_{\nu}t^{-1/2}$.

First we must describe the coupling $\mu^{(t)}$. 
The coupling $\mu^{(t)}$ differs from the coupling described at the start of this subsection essentially only
by early completion of the Brownian phase of its initial half-cycle:
\begin{enumerate}
\item Couple the Brownian motions by reflection till time
\[
T'_1\quad=\quad\inf\{t>0: U_t = -\tfrac{4}{t}\} \wedge \inf\{t>0: V_t =0\}\,. 
\]
Then couple synchronously till time 
\[
S_1\quad=\quad\inf\{t\ge T'_1: V_t=0\}\,. 
\]
\item Starting from $(U_{S_1},0)$, employ the coupling $\hat{\mu}_{(U_{S_1},0)}$ described at the start of this subsection.
\end{enumerate}
In the following, $C_1, C_2, \dots$ will be positive constants that do not depend on $t$. 

First observe that $T'_1$ is smaller than the hitting time of the (rate 4) Brownian motion \(U\) on the level $\frac{4}{t}$ when started at \(0\).
Therefore arguments using the reflection principle show that $\mu^{(t)}\{T'_1>1\} \le \frac{C_1}{t}$.
On the other hand,
\[
V(T'_1)\quad=\quad\int_0^{T'_1} (U(s) + 4/t){\d}s -(4/t)T'_1\quad\leq\quad \int_0^{T'_1} (U(s) + 4/t){\d}s\,,
\]
and we can apply
\eqref{eq:KearneyMaj}
to deduce that
$\mu^{(t)}\{V_{T'_1}>2\} \le \frac{C_2}{t}$. 
Now $S_1-T'_1 \le \frac{t}{2}$ on the event $\{V_{T'_1} \le 2\}$. 
By scaling arguments and Theorem \ref{thm:Kolrate},
\begin{eqnarray*}
\mu^{(t)}\left\{\tau^{(t)}-S_1>\frac{t}{2}\right\} \quad&\le\quad& 
\hat{\mu}_{(\frac{4}{t},0)}\left\{\tau>\frac{t}{2}\right\}\\
\quad&=\quad&\hat{\mu}_{(1,0)}\left\{\frac{4^2}{t^2}\tau>\frac{t}{2}\right\} 
\quad\le\quad \frac{C_3}{t},
\end{eqnarray*}
where $\tau$ denotes the coupling time under the coupling strategy 
described at the start of this subsection.

Combining the above facts, we obtain $\mu^{(t)}\{\tau^{(t)}>t+1\} \le \frac{C_4}{t}$. 
The theorem follows.
\end{proof}

\section{Efficiency for the finite-look-ahead coupling} \label{sec:look-ahead}

Despite these negative results, nevertheless it is possible to exhibit a simple and explicit efficient coupling strategy if we allow couplings which are allowed finite but varying amounts of precognition. 
In this section, we will describe a simple non-Markovian coupling strategy which achieves efficiency.
Our approach will be to divide time into successive intervals $[S_n, S_{n+1}]$ of growing size and then to couple the driving Brownian motions 
(and hence the Kolmogorov diffusions) according to a non-Markovian recipe on each such interval. 
We call this coupling a \emph{finite-look-ahead coupling} as the coupling construction on each interval $[S_n,S_{n+1}]$, although non-Markovian, requires information on the driving Brownian paths only till time $S_{n+1}$. 
Further, the coupling of the future paths of the Kolmogorov diffusions conditional on the paths run up till time $S_n$ depends on the past only through the values taken at time $S_n$. 
Thus the coupling restricted to times $S_n$ (for ${n=0,1,\ldots}$) can be considered to have a Markovian property.  

Recall that in \eqref{eq:representation}, we wrote down a representation for the Kolmogorov diffusion in terms of a Brownian motion vector $\WW$ at time $T$. Note in particular that the lower-triangular form of \(\LL\) means that \eqref{eq:representation} represents \(\Koli_0(T)\) by \(\Koli_0(T)\stackrel{\mathcal{D}}{=}x_0+\WWi_0(T)\).
However this holds only for the stipulated fixed time \(T\); 
we cannot maintain the representation \eqref{eq:representation}
of \(\Kol(T)\) in terms of the full vector \(\WW\) of independent standard Brownian motions 
while simultaneously writing \(\Koli_0(t)=x_0+\WWi_0(t)\) for \(0\leq t \leq T\). 
The representation \eqref{eq:representation} can be best understood as a fragment of an infinite-dimensional representation as follows. 
Consider the initial Brownian path \(\{B(t):0\leq t\leq T\}\) as a Gaussian vector in the infinite dimensional space \(C([0,T])\) of
continuous paths. 
Realize this as the evaluation at time \(T\) of an infinite-dimensional Brownian motion \(\mathcal{B}\) starting at the zero path and taking values in the Banach space \(C([0,T])\),
and evolving in ``algorithmic time''
(as opposed to the ``process time'' \(t\) used for the stochastic process \(\{B(t):0\leq t\leq T\}\)).
A candidate for this is given by
the Karhunen-Lo\`eve expansion
\begin{equation}\label{equation:infBM}
\mathcal{B}(\zeta,t)\quad=\quad
x_0 + \sum_{k=1}^{\infty}\sqrt{\lambda_k}\, w_k(\zeta)\, f_k(t/T)
\end{equation}
for $\zeta, t \in [0,T]$,
where $\lambda_k=\frac{1}{\left(k-\frac{1}{2}\right)^2\pi^2}$, $f_k(t)=\sqrt{2}\sin\left(\left(k-\frac{1}{2}\right)\pi t\right)$ 
denote the eigenvalues and eigenfunctions respectively of the covariance kernel of Brownian motion viewed as a bounded operator acting on $L^2[0,1]$, 
and the $w_k$'s are independent standard Brownian motions. 
Here $\zeta$ represents the algorithmic time and $t$ represents the process time.  
(See \cite{DaPratoZabczyk-2008} for more on stochastic analysis on infinite dimensional spaces.)

Set $\Koli_0(t)=x_0+ \mathcal{B}(T,t)$. 
For $1 \le r \le k$ and $t \in [0,T]$, the iterated integrals \(\Koli_r(t)\) can be viewed as $\mathcal{I}_r(T,t)$ 
where $\{\mathcal{I}_r(\zeta, t): \zeta, t \in [0,T]\}$ have the representation
\begin{equation}\label{equation:infint}
\Koli_r(\zeta,t)\;=\;
x_r + t x_{r-1} + \frac{t^2}{2}  x_{r-2} + \ldots + \frac{t^r}{r!}  x_0 + T^r\sum_{k=1}^{\infty}\sqrt{\lambda_k}\,w_k(\zeta)\,f_{r,k}(t/T)
\end{equation}
with $f_{r,k}(t)=\int_0^t\dots\int_0^{s_1}f_k(s_0) \, {\d}{s_0} {\d}{s_1}\dots {\d}{s_{r-1}}$.

It follows from \eqref{eq:representation} and \eqref{equation:infint} that, for a fixed process time $T$, the random process \(\{\WW(\zeta): \zeta \in [0,T]\}\) obtained by
\begin{equation}\label{equation:algorep}
\WW(\zeta) \quad=\quad \LL^{-1}\DD(T^{-1})\left(\Kol(\zeta, T) - \DD(T) \HH \DD(T^{-1}) \xx\right)
\end{equation}
is a standard $(k+1)$-dimensional Brownian motion evolving in algorithmic time up to time \(T\).
Further, from the representation in \eqref{equation:infint}, it follows that the Brownian motion $\WW$, obtained in this way, does not depend on $T$. 
Thus we can write the components $\WWi_j$ of $\WW$ as linear combinations of the Brownian motions $w_k$:
\begin{equation}\label{equation:iterint}
\WWi_j(\zeta)\quad=\quad\sum_{k=1}^{\infty}w_k(\zeta)e_{j,k}\,,
\end{equation}
where the $e_{j,k}$ do not depend on $T$.

It will be convenient to define the infinite matrix $\EE$ whose $(j,k)$-th entry is given by $e_{j,k}$,
and to note that its rows must form an independent orthonormal set in the sequence space $l^2$.

We now describe the \emph{finite-look-ahead coupling} by constructing the coupled paths in each of the successive time intervals (look-ahead-blocks) \([T_1+\ldots+T_n,T_1+\ldots+T_{n+1}]\) using the Brownian motion $\mathcal{B}_n$ on the infinite-dimensional space $C[0,T_n]$ described in \eqref{equation:infBM}, for $T_n=\alpha^n$ for some fixed $\alpha>1$.
In the following, set $S_0=0$ and
$S_n=T_1+\ldots+T_n$
to be the time of initiation of the \(n^\text{th}\) look-ahead-block.

Before commencing detailed analysis, we provide a brief heuristic description of the coupling. 
Recall that the reflection coupling of Brownian motions started from distinct points gives a maximal coupling by using reflection on one Brownian path to produce the other (till the coupling time),
using the hyperplane that bisects the line joining their respective starting points. 
Although the Kolmogorov diffusion $\{\Kol(t): t \in [0,T]\}$ is not a Brownian motion in process time, 
the process $\{\WW(\zeta): \zeta \in [0,T]\}$ obtained from $\Kol(\zeta, T)$ in \eqref{equation:algorep} for a fixed process time $T$ evolves as a Brownian motion in algorithmic time. 
With this observation in mind, we couple two Kolmogorov diffusions $\Kol^{(1)}$ and $\Kol^{(2)}$ started from $\xx^{(1)}$ and $\xx^{(2)}$ respectively on the block $[0,T]$ as follows:
\begin{itemize}
\item[1.] Couple two infinite-dimensional Brownian motions $\{\mathcal{B}_n^{(1)}(\zeta,t): \zeta, T \in [0,T]\}$ 
and $\{\mathcal{B}_n^{(2)}(\zeta,t): \zeta, T \in [0,T]\}$ 
in such a way that the processes $\WW^{(1)}$ and $\WW^{(2)}$ obtained from $\mathcal{B}_n^{(1)}$ and $\mathcal{B}_n^{(2)}$ respectively 
via \eqref{equation:infint} and \eqref{equation:algorep} are reflection coupled (in algorithmic time) 
by reflecting in the hyperplane bisecting the initial discrepancy vector $\WW^{(1)}(0)-\WW^{(2)}(0)$.
\item[2.] Repeat this construction on each block $[S_n,S_{n+1}]$ updating the starting points of the respective Kolmogorov diffusions to $\Kol^{(1)}(S_n)$ and $\Kol^{(2)}(S_n)$.
\end{itemize}
This coupling of the infinite-dimensional Brownian motions projects down to a coupling of the corresponding driving Brownian motions (and hence the Kolmogorov diffusions) by setting
\begin{equation}\label{equation:projdown}
\Koli_0^{(i)}(t)\quad=\quad
\Koli_0^{(i)}(S_n)+\mathcal{B}_n^{(i)}(T_{n+1}, t-S_n)
\end{equation}
for $t \in [S_n,S_{n+1}]$ and $i=1,2$.

It is reasonable to expect that if such a coupling is achieved then it will be efficient,
as we are using reflection coupling of Brownian motions (in algorithmic time) in each block. 
The coupling is analysed in what follows, and efficiency of the coupling is shown in Theorem \ref{thm:finite-look-ahead}.

We will give an inductive description of the coupling for Kolmogorov diffusions $\Kol^{(1)}$ and $\Kol^{(2)}$ started from $\xx^{(1)}$ and $\xx^{(2)}$ respectively. Suppose we have constructed the coupling on $[0,S_n]$. If $\Kol^{(1)}(S_n)=\Kol^{(2)}(S_n)$, we can synchronously couple the Brownian motions $\Koli^{(1)}_0$ and $\Koli^{(2)}_0$ after $S_n$. So, henceforth we assume $\Kol^{(1)}(S_n)\neq\Kol^{(2)}(S_n)$. We will couple two infinite-dimensional Brownian motions $\mathcal{B}_n^{(1)}$ and $\mathcal{B}_n^{(2)}$ on the block $[0, T_{n+1}]$, which are represented by the Karhunen-Lo\`eve expansion
\begin{equation}\label{equation:infBMcouple}
\mathcal{B}_n^{(i)}(\zeta,t)\quad=\quad
\sum_{k=1}^{\infty}\sqrt{\lambda_k}\,w_{n,k}^{(i)}(\zeta)\,f_k(t/T_{n+1})
\end{equation}
for $\zeta, t \in [0,T_{n+1}]$ and $i=1,2$. 
The corresponding coupling for the Brownian motions $\Koli_0^{(1)}$ and $\Koli_0^{(2)}$ (and thus for the entire diffusions $\Kol^{(1)}$ and $\Kol^{(2)}$) on the block $[S_n,S_{n+1}]$ can then be obtained by \eqref{equation:projdown}.

The $(k+1)$ dimensional Brownian motions running in algorithmic time obtained from the iterated integrals by \eqref{equation:algorep} are denoted by $\WW_n^{(1)}$ and $\WW_n^{(2)}$ respectively.

Write $\ZZ_n \;=\; \Kol^{(1)}(S_n) - \Kol^{(2)}(S_n)$. 
Define the unit vectors $\displaystyle{\underline{\nu}_n=\frac{\LL^{-1} \DD(T_n^{-1}) \ZZ_n}{\|\LL^{-1} \DD(T_n^{-1}) \ZZ_n\|}}$ 
\Big(taking $\displaystyle{\underline{\nu}_0=\frac{\LL^{-1}\zz}{\|\LL^{-1}\zz\|}}$\Big)
and $\displaystyle{\underline{\eta}_n=\frac{(\LL^{-1} \HH\DD(\alpha^{-1})\LL) \underline{\nu}_n}{\|(\LL^{-1} \HH\DD(\alpha^{-1})\LL) \underline{\nu}_n\|}}$. 
Let $\{B_{n,j} : j \ge 1\}$ be independent standard scalar Brownian motions.
Recall the matrix $\EE$ defined just after \eqref{equation:iterint}. 
The infinite vector $\underline{v}_{n,1}=\EE^T\underline{\eta}_n$ has $l^2$-norm one.
We can extend it to an orthonormal basis of $l^2$, say $\{\underline{v}_{n,1}, \underline{v}_{n,2}\dots\}$. 
Let $\PP$ denote the (infinite) orthogonal matrix with columns formed by these vectors.
Note that \(\PP\) is a unitary matrix, and therefore the rows of $\PP$ are also orthonormal.
Now, we can define a coupling of the component Brownian motions $w^{(i)}_{n,k}$ in terms of the matrix $\PP$ and the Brownian motions $B_{n,j}$. 
Define the stopping time $\displaystyle{\sigma_n=\inf\left\lbrace \zeta>0: B_{n,1}(\zeta)=-\tfrac12{\|\LL^{-1} \HH\DD(T_{n+1}^{-1}) \ZZ_n\|}\right\rbrace}$. 
Then the coupling is a reflection coupling as follows:
\begin{eqnarray}\label{eqnarray:compcouple}
w^{(i)}_{n,k}(\zeta)\quad=\quad
\left\{
       \begin{array}{ll}
(-1)^{i+1}\PPi_{k,1}B_{n,1}(\zeta) + \sum_{j=2}^{\infty}\PPi_{k,j}B_{n,j}(\zeta) \ & \mbox{ when } \zeta \le \sigma_n\,,\\
w^{(i)}_{n,k}(\sigma_n)+\sum_{j=1}^{\infty}\PPi_{k,j}\left(B_{n,j}(\zeta)-B_{n,j}(\sigma_n)\right) \ & \mbox{ when } \zeta > \sigma_n\,,
       \end{array}
\right.       
\end{eqnarray}
for $i=1,2$. This gives the coupling between $\mathcal{B}_n^{(1)}$ and $\mathcal{B}_n^{(2)}$, and thus the corresponding Kolmogorov diffusions $\Kol^{(1)}$ and $\Kol^{(2)}$, via \eqref{equation:infBMcouple}.

Note that the reflection coupling recipe \eqref{eqnarray:compcouple} along with \eqref{equation:iterint} give us
\begin{equation}\label{equation:Wcouplealgo}
\WW_n^{(1)}(\zeta) -\WW_n^{(2)}(\zeta)\quad=\quad
2\, B_{n,1}(\zeta \wedge \sigma_n)\, \underline{\eta}_n\,.
\end{equation}
Thus the $(k+1)$ dimensional Brownian motions $\WW_n^{(1)}$ and $\WW_n^{(2)}$ (running in algorithmic time) 
are coupled in such a way that their difference is a rate \(4\) scalar Brownian motion running along the vector $\underline{\eta}_n$. 
This will be the crucial fact we will use to prove efficiency of this coupling strategy in the following theorem.

\begin{theorem}[Efficient finite-look-ahead coupling strategies]\label{thm:finite-look-ahead}
 The finite-look-ahead coupling of the Kolmogorov diffusion of index \(k\), constructed as above over successive intervals of lengths \(T_n=\alpha^n\) for some fixed \(\alpha>1\),
provides an efficient coupling strategy for the Kolmogorov diffusion.
\end{theorem}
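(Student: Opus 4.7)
The plan is to track the scaled discrepancy vectors $U_n := \LL^{-1}\DD(T_n^{-1})\ZZ_n$ across successive blocks and reduce the efficiency bound to controlling $\|M^{n+1}U_0\|$, where $M := \LL^{-1}\HH\DD(\alpha^{-1})\LL$. From \eqref{equation:algorep} applied at the two endpoints of a block, together with $\DD(\alpha^{-1})\DD(T_n^{-1})=\DD(T_{n+1}^{-1})$ and $\HH(T)=\DD(T)\HH\DD(T^{-1})$, one verifies the block-update identity $\LL^{-1}\DD(T_{n+1}^{-1})\ZZ_{n+1} = (\WW_n^{(1)}(T_{n+1})-\WW_n^{(2)}(T_{n+1})) + M U_n$. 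Observing that the unit vector $\underline{\eta}_n$ is by construction the normalization of $M U_n$, combining this with \eqref{equation:Wcouplealgo} and the defining level $-\tfrac12\|M U_n\|$ of $\sigma_n$ yields $U_{n+1}=c_n\underline{\eta}_n$ with $c_n = 2B_{n,1}(T_{n+1}\wedge\sigma_n) + \|M U_n\|$: on $\{\sigma_n\leq T_{n+1}\}$ the coupling completes in the block ($c_n=0$), whereas on $\{\sigma_n>T_{n+1}\}$ we have $c_n>0$.

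Next I would exploit the following reflection-principle identity: with $B=B_{n,1}(T_{n+1})\sim N(0,T_{n+1})$ and $B^*=\min_{[0,T_{n+1}]}B_{n,1}$, a direct integration against the reflection-principle density for $(B,B^*)$ gives $\Expect{(2B+a)\,\mathbb{I}\{B^*>-a/2\}} = a$ for every $a>0$. Applied conditionally on $\mathcal{F}_{S_n}$ with $a=\|M U_n\|$, this produces the exact identity $\Expect{c_n\,\mathbb{I}(\sigma_n>T_{n+1}) \mid \mathcal{F}_{S_n}} = \|M U_n\|$. Since on the failure event $\|M U_{n+1}\| = c_n\|M^2 U_n\|/\|M U_n\|$, the tower property and induction on $n$ then yield the clean identity
\[
\Expect{\|M U_n\|\,\mathbb{I}(\tau>S_n)} \;=\; \|M^{n+1}U_0\|\,.
\]

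The third step is an eigen-analysis of $M$. Since $M$ is similar (via $\LL$) to $\HH\DD(\alpha^{-1})$, which is lower-triangular with distinct diagonal entries $1,\alpha^{-1},\ldots,\alpha^{-k}$, an upward induction on the eigenvalue equations shows that the eigenvector $v_j$ of $\HH\DD(\alpha^{-1})$ associated with $\alpha^{-j}$ has its first $j$ coordinates equal to zero. Consequently, if $z_0=\cdots=z_{r-1}=0$ and $z_r\neq 0$, then $\zz = \sum_{j\geq r}c_j v_j$ with $c_r\neq 0$, so $U_0=\LL^{-1}\zz$ lies in the span of those $M$-eigenvectors with eigenvalue at most $\alpha^{-r}$; hence $\|M^{n+1}U_0\|\leq C_{\xx,\yy}\,\alpha^{-(n+1)r}$.

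Finally, the Gaussian hitting-time tail $\Prob{\sigma_n > T_{n+1}\mid \mathcal{F}_{S_n}} \leq \|M U_n\|/\sqrt{2\pi T_{n+1}}$ gives $\Prob{\tau>S_{n+1}} \leq \|M^{n+1}U_0\|/\sqrt{2\pi T_{n+1}} \leq C_{\xx,\yy}\,\alpha^{-(n+1)(r+\tfrac12)}$, which is $O(S_{n+1}^{-r-1/2})$ because $T_{n+1}$ and $S_{n+1}$ are of the same order. Extension to arbitrary $t>0$ via $\Prob{\tau>t}\leq\Prob{\tau>S_n}$ for $S_n\leq t<S_{n+1}$, compared against the maximal rate $d_t(\xx,\yy)\sim t^{-r-1/2}$ from Theorem \ref{thm:max-coupling}, establishes efficiency. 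I expect the main obstacles to be (i) the exact reflection-principle identity, which requires careful integration against the joint density of $(B,B^*)$, and (ii) the eigen-decomposition argument that places $U_0$ in the correct invariant subspace of $M$; this is what forces the iteration to deliver the sharp decay rate $\alpha^{-nr}$ matching the index $r$ arising in Theorem \ref{thm:max-coupling}, rather than some looser power.
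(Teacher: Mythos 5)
Your proposal is correct, and it rests on exactly the same structural ingredients as the paper's proof: the block-update identity $U_{n+1}=MU_n+\Delta_{n+1}\WW(T_{n+1})$ with $M=\LL^{-1}\HH\DD(\alpha^{-1})\LL$, the fact that reflection coupling in algorithmic time makes $\Delta_{n+1}\WW$ a rate-$4$ scalar Brownian motion along $\underline{\eta}_n$ (so the direction $\underline{\nu}_n$ evolves deterministically), and the spectral decomposition of $M$ with eigenvalues $1,\alpha^{-1},\ldots,\alpha^{-k}$, where vanishing of $z_0,\ldots,z_{r-1}$ places $U_0$ in the span of eigenvectors with eigenvalue at most $\alpha^{-r}$. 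Where you genuinely diverge is in how the recursion across blocks is closed. The paper normalizes $F_n$ to a process $G$ that is a single time-changed scalar Brownian motion in concatenated algorithmic time, reads the survival probability off the accumulated intrinsic clock $\sum_m 4\alpha^m/\|M^m\underline{\nu}_0\|^2$, and needs a case split: a direct estimate when $\gamma_0\neq0$ and a dominating comparison process $\widetilde{G}$ when $\gamma_0=0$. You instead prove the exact first-moment identity $\Expect{\|MU_n\|\operatorname{\mathbb{I}}(\tau>S_n)}=\|M^{n+1}U_0\|$ by optional stopping of the stopped martingale $2B_{n,1}(\cdot\wedge\sigma_n)+\|MU_n\|$ within each block, and invoke the Gaussian hitting bound only once, in the final block. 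Your identity is correct (the stopped local martingale is dominated by $\|MU_n\|+2\sup_{[0,T_{n+1}]}|B_{n,1}|\in L^1$, so it is a true martingale), and it handles all values of $r$ uniformly, avoiding both the case split and the comparison process; that is a modest but real simplification. Two small points you should make explicit: the telescoping uses that $\|M^2U_n\|/\|MU_n\|$ is $\mathcal{F}_{S_n}$-measurable (indeed deterministic, by the deterministic evolution of $\underline{\nu}_n$, although the identity also survives iterating with increasing powers of $M$ without that fact); and efficiency for bounded $t$ is needed too, which is immediate since $d_t(\xx,\yy)$ is bounded away from zero on compact time intervals and tends to $1$ as $t\downarrow0$.
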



\begin{remark}
 An efficient coupling \emph{strategy} for the Kolmogorov diffusion has to couple at a much faster rate 
 when the initial discrepancy vector \(\zz\) has an initial segment \(z_0, z_1, \ldots, z_{r-1}\) which is zero: see Theorem \ref{thm:max-coupling}.
\end{remark}

\begin{proof}
Consider the above coupling between two index \(k\) generalized Kolmogorov diffusions \(\Kol^{(1)}\), \(\Kol^{(2)}\), 
begun at \(\xx^{(1)}\), \(\xx^{(2)}\) respectively, with \(\zz=\xx^{(1)}-\xx^{(2)}\).
Setting \(\ZZ_0=\zz=\xx^{(1)}-\xx^{(2)}\) and $S_0=T_0=0$, and employing the representation \eqref{equation:algorep}, we may write for $n \ge 1$,
\begin{equation}\label{equation:zeqn}
\ZZ_n(\zeta) \;=\;
 \Kol^{(1)}(S_{n-1}+\zeta) - \Kol^{(2)}(S_{n-1}+\zeta) 
 \;=\; \DD(T_n)\Big(\HH \DD(T_n^{-1}) \ZZ_{n-1}(T_{n-1}) +  \LL \Delta_n \WW(\zeta)\Big)\,,
\end{equation}
where \(\Delta_n \WW(\zeta)=\WW_{n-1}^{(1)}(\zeta)-\WW_{n-1}^{(2)}(\zeta)\) and $\zeta \in [0,T_n]$. We will write $\ZZ_n$ for $\ZZ_n(T_n)$ and $\Delta_n\WW$ for $\Delta_n\WW(T_n)$.

Set $F_0(0)=\|\LL^{-1}\zz\|$ and $\displaystyle{\underline{\nu}_0=\frac{\LL^{-1}\zz}{\|\LL^{-1}\zz\|}}$. For $n \ge 1$, write $F_n(\zeta)=\|\LL^{-1} \DD(T_n^{-1}) \ZZ_n(\zeta)\|$ and recall the unit vectors $\displaystyle{\underline{\nu}_n =  \frac{\LL^{-1} \DD(T_n^{-1}) \ZZ_n}{\|\LL^{-1} \DD(T_n^{-1}) \ZZ_n\|}}$ and $\displaystyle{\underline{\eta}_n=\frac{(\LL^{-1} \HH\DD(\alpha^{-1})\LL) \underline{\nu}_n}{\|(\LL^{-1} \HH\DD(\alpha^{-1})\LL) \underline{\nu}_n\|}}$ (defined when $\Kol^{(1)}(S_n)\neq \Kol^{(2)}(S_n)$). As before, we will write $F_n$ for $F_n(T_n)$.

Then \eqref{equation:zeqn} gives us the following:
\begin{multline}\label{multiline:fgen}
 F_n(\zeta) \quad=\quad \|\LL^{-1} \DD(T_n^{-1}) \ZZ_n(\zeta)\| \quad=\quad
 \|\LL^{-1} \HH \DD(T_n^{-1})\ZZ_{n-1} + \Delta_n \WW(\zeta)\|\\
 \quad=\quad  \|F_{n-1} (\LL^{-1} \HH \DD(\alpha^{-1}) \LL) \underline{\nu}_{n-1} + \Delta_n \WW(\zeta)\|\,.
\end{multline}
Suppose $\Kol^{(1)}(S_{n-1})\neq \Kol^{(2)}(S_{n-1})$. Then from \eqref{equation:Wcouplealgo}, we have
\[
 \Delta_n \WW(\zeta) \quad=\quad 2\, B_{n-1,1}(\zeta \wedge \sigma_{n-1})\,  \underline{\eta}_{n-1}\,.
\]
Substituting this into \eqref{multiline:fgen}, 
and noting the definition of the stopping time \(\sigma_{n-1}\),
we get
\begin{equation}\label{equation:fgentwo}
F_n(\zeta)\quad=\quad
\left( F_{n-1}  + 2 \,\frac{ B_{n-1,1}(\zeta \wedge \sigma_{n-1}) }{\|(\LL^{-1} \HH \DD(\alpha^{-1}) \LL) \underline{\nu}_{n-1}\|}\right)\|(\LL^{-1} \HH \DD(\alpha^{-1}) \LL) \underline{\nu}_{n-1}\|\,.
\end{equation}
Suppose the coupling has not been successful in $[0,S_n]$. Then putting $\zeta=T_n$ in \eqref{multiline:fgen} and using \eqref{equation:Wcouplealgo} again, we proceed similarly as above to obtain the following recursive relation:
\begin{multline}\label{multiline:nudet}
 F_n \underline{\nu}_n 
 \quad=\quad \left( F_{n-1}  + 2\, \frac{ B_{n-1,1}(T_n \wedge \sigma_{n-1}) }{\|(\LL^{-1} \HH \DD(\alpha^{-1}) \LL) \underline{\nu}_{n-1}\|}\right)(\LL^{-1} \HH \DD(\alpha^{-1}) \LL) \underline{\nu}_{n-1}\,.
\end{multline}
Thus, for this sequential coupling scheme it follows from \eqref{multiline:nudet} that the vector \(\underline{\nu}_n\) is deterministic, and indeed
\begin{equation}\label{eqn:nu-evolution}
 \underline{\nu}_n \quad=\quad \frac{(\LL^{-1} \HH \DD(\alpha^{-1}) \LL) \underline{\nu}_{n-1}}{\|(\LL^{-1} \HH \DD(\alpha^{-1}) \LL) \underline{\nu}_{n-1}\|}
\;=\; \ldots \;=\;
 \frac{(\LL^{-1} (\HH \DD(\alpha^{-1}))^n \LL) \underline{\nu}_{0}}{\|(\LL^{-1} (\HH \DD(\alpha^{-1}))^n \LL) \underline{\nu}_{0}\|}\,.
\end{equation}
So we can re-express \eqref{equation:fgentwo} in terms of \(\underline{\nu}_0\):
\begin{equation}\label{equation:fgenthree}
F_n(\zeta)\;=\;
\left( F_{n-1}  + 2\, \frac{ B_{n-1,1}(\zeta \wedge \sigma_{n-1}) }{\|(\LL^{-1} \HH \DD(\alpha^{-1}) \LL) \underline{\nu}_{n-1}\|}\right) \frac{\|(\LL^{-1} (\HH \DD(\alpha^{-1}))^n \LL) \underline{\nu}_{0}\|}{\|(\LL^{-1} (\HH \DD(\alpha^{-1}))^{n-1} \LL) \underline{\nu}_{0}\|}\,.
\end{equation}
Consequently, if we define
$$
G_n(\zeta)\quad=\quad
\frac{F_n(\zeta)}{\|(\LL^{-1} (\HH \DD(\alpha^{-1}))^{n} \LL) \underline{\nu}_{0}\|}
$$
for $\zeta \in [0, T_n]$, then \eqref{equation:fgenthree} gives us
\begin{equation}\label{eq:interpolation}
 G_n(\zeta) \quad=\quad
  \left( 
 G_{n-1}
 + 2 \, \frac{ B_{n-1,1}(\zeta \wedge \sigma_{n-1})}{\|(\LL^{-1} (\HH \DD(\alpha^{-1}))^n \LL) \underline{\nu}_{0}\|}  \right)
  \,,
\end{equation}
for $\zeta \in [0,T_n]$, where as before, we write $G_{n-1}$ for $G_{n-1}(T_{n-1})$.

Consider now the (deterministic) evolution of the length \(\|(\LL^{-1} (\HH \DD(\alpha^{-1}))^n \LL) \underline{\nu}_{0}\|\).
The matrices \(\LL^{-1} \HH \DD(\alpha^{-1}) \LL\) and \(\HH \DD(\alpha^{-1})\) share the same eigenvalues.
Since \(\HH\) is lower-triangular, and has values \(1\) along the diagonal, and since \(\alpha^{-1}\in(0,1)\), it follows that the eigenvalues of \(\LL^{-1} \HH \DD(\alpha^{-1}) \LL\) are distinct and positive,
and are \(1\), \(\alpha^{-1}\), \ldots, \(\alpha^{-k}\). 
Consequently it follows that \(\LL^{-1} \HH \DD(\alpha^{-1}) \LL\) has \(k+1\) distinct eigenvectors; 
let \(\ee_0\), \(\ee_1\), \ldots, \(\ee_k\) be the eigenvectors corresponding to the eigenvalues \(1\), \(\alpha^{-1}\), \ldots, \(\alpha^{-k}\);
we suppose these to be of unit Euclidean norm. 
Note in particular that \(\ee_0=(1,0,\ldots,0)^\top\) while \(\ee_1\), \ldots, \(\ee_k\) all have zero initial coordinate (this follows from the lower-triangular nature of the matrices \(\LL\), \(\HH\), \(\DD(\alpha^{-1})\);
moreover the explicit construction of \(\LL\) implies that its top row is given by \((1,0,\ldots,0)\)).
Write \(\underline{\nu}_0=\gamma_0\ee_0+\gamma_1\ee_1+\ldots+\gamma_k\ee_k\).

We suppose first that \(\gamma_0\neq0\). From the above it follows that
\begin{equation}\label{equation:normbound}
 \|\LL^{-1} (\HH \DD(\alpha^{-1}))^{n} \LL \underline{\nu}_0\| \quad=\quad
 \|\gamma_0\ee_0 + \gamma_1\alpha^{-n}\ee_1+\ldots+\gamma_k\alpha^{-kn}\ee_{k}\|\,,
\end{equation}
which by the triangle inequality lies in the range
\begin{multline}\label{multiline:triangle}
|\gamma_0| - \max\{|\gamma_i|:i=1,\ldots,k\}\alpha^{-n}\\
 \quad\leq\quad \|\LL^{-1} (\HH \DD(\alpha^{-1}))^{n} \LL \nu_0\| \quad\leq\quad\\
 |\gamma_0| + \max\{|\gamma_i|:i=1,\ldots,k\}\alpha^{-n}\,.
\end{multline}
Choose \(n_0\) such that the lower bound of this range exceeds \(0\) for \(n\geq n_0\). 



Observe from the ``algorithmic time'' interpolation \eqref{eq:interpolation} that if we set $G(0)=F_0(0)=\|\LL^{-1}\zz\|$ and \(G(\zeta)=G_n(\zeta-S_{n-1})\) for $\zeta \in [S_{n-1},S_n]$, then $G$ is driven by
a time-changed Brownian motion built
up from the increments
\[
2 \, \frac{B_{n-1,1}((\zeta-S_{n-1}) \wedge \sigma_{n-1})}{\|\LL^{-1} (\HH \DD(\alpha^{-1}))^{n} \LL \underline{\nu}_0\|} \qquad \text{for }S_{n-1} < \zeta \leq S_{n}\,.
\]
Moreover, $G(\zeta)=0$ for some $\zeta \in [S_{n-1},S_n]$ (this happens when $\zeta-S_{n-1}=\sigma_{n-1}$) if and only if $F_n=0$ and thus the Kolmogorov diffusions $\Kol^{(1)}$ and $\Kol^{(2)}$ (running in ``process time") couple by time $S_n$. 
The ratio $S_n/S_{n-1}$ always equals $\alpha$,
so a simple asymptotic analysis shows that for coupling efficiency considerations it is sufficient to analyse the hitting time of zero for the process $G$.

We see from \eqref{multiline:triangle} that by time \(S_n=\alpha + \ldots + \alpha^n\) the intrinsic time of the time-changed Brownian motion will lie between the two values
\[
\sum_{m=1}^{n_0-1}\frac{4 \alpha^m}{\|\LL^{-1} (\HH \DD(\alpha^{-1}))^{m} \LL \underline{\nu}_0\|^2}
+
\sum_{m=n_0}^n \frac{4 \alpha^m}{(|\gamma_0|\mp  \max\{|\gamma_i|:i=1,\ldots,k\}\alpha^{-m})^2}
\]
and so the probability of \(G\) not yet having hit zero will be of order
\[
\frac{C_1}{\sqrt{S_n}}\quad\leq\quad
\left(
C_3 + \sum_{m=n_0}^n \frac{4 \alpha^m}{(|\gamma_0|\mp  \max\{|\gamma_i|:i=1,\ldots,k\}\alpha^{-m})^2}
\right)^{-1/2}
 \quad\leq\quad \frac{C_2}{\sqrt{S_n}}
\]
for positive constants \(C_1\), \(C_2\), \(C_3\).

Asymptotic analysis for large \(n\) now shows that efficiency follows in the case \(\gamma_0\neq0\).

Suppose \(\gamma_0=0\), so the first coordinate of \(\zz\) must vanish. In case the first \(r>0\) coordinates of \(\zz\) vanish, 
the deterministic evolution of \(\underline{\nu}_n\) given by \eqref{eqn:nu-evolution} together with the lower-triangular nature of \(\LL^{-1} \HH \DD(\alpha^{-1})\LL\) ensures that the  first \(r\) coordinates of \(\underline{\nu}_n\) also vanish,
for all \(n\).
Hence,
\begin{equation}\label{equation:normdec}
 \|(\LL^{-1} \HH \DD(\alpha^{-1}) \LL) \underline{\nu}_{n}\| \quad\leq\quad \alpha^{-r} \,.
\end{equation}
We now argue much as above, but working with the rather simpler comparison process
\[
 \widetilde{G}(t) \quad=\quad 2 \alpha^{nr} B_{n-1,1}((\zeta-S_{n-1}) \wedge \sigma_{n-1}) + \alpha^r\|(\LL^{-1} \HH \DD(\alpha^{-1}) \LL) \underline{\nu}_{n-1}\| \widetilde{G}(S_{n-1}) \,,
\]
for \(S_{n-1} < \zeta \leq S_{n}\), where \(\widetilde{G}(0)=\|\LL^{-1}\zz\|\).

From \eqref{equation:normdec}, $\alpha^r\|(\LL^{-1} \HH \DD(\alpha^{-1}) \LL) \underline{\nu}_{n-1}\| \le 1$. Thus, the jumps of the process $\widetilde{G}$ at the times $\{S_i\}_{i \ge 0}$ decrease the value of $\widetilde{G}$. Hence, the hitting time of zero for \(\tfrac12 \widetilde{G}(S_n)\) is dominated by that of a new Brownian motion run till time 
\[
\alpha^{2r} T_1 + \ldots + \alpha^{2nr} T_n \quad=\quad
\alpha^{2r+1} + \ldots + \alpha^{n (2r+1)} \sim {C_1}^{-2} \alpha^{(n+1)(2r+1)}\,.
\]
This avoids \(0\) up to this time with probability of order \(C_1 / \alpha^{(n+1)(2r+1)/2}\).
Since \(S_n \sim C_2 \alpha^{n+1}\), we deduce that the asymptotics for the probability of avoiding zero before \(S_n\) are of order at most \(1/S_n^{r+1/2}\).

As a consequence it follows that this finite-look-ahead coupling provides an efficient coupling strategy, taking efficient advantage of faster coupling when an initial set of coordinates of \(\zz\) vanish. 
This completes the proof.
\end{proof}

Suppose we require the range of precognition to be bounded throughout the coupling procedure:
is it still possible to produce efficient couplings, so long as the Brownian components of the two coupled Kolmogorov diffusions start at different points?
We do not yet have a general answer to this question,
but can show that the obvious approach cannot work.
 Consider the above finite-look-ahead coupling of the Kolmogorov diffusion of index \(k\), defined over successive intervals of length \(T_n=1\) (so in particular the look-ahead is \emph{bounded}).
Then this coupling may not even succeed!

 We justify this assertion by proceeding as in the proof of Theorem \ref{thm:finite-look-ahead}, but taking \(T_n=1\), so \(S_n=n\). 
 Then \eqref{equation:fgentwo} and \eqref{eqn:nu-evolution} simplify: we obtain
 \begin{align}
  F_n(\zeta) \quad&=\quad \|(\LL^{-1} \HH \LL) \underline{\nu}_{n-1}\| F_{n-1} + 2 B_{n-1,1}(\zeta \wedge \sigma_{n-1}) \,,\nonumber\\
  \underline{\nu}_n \quad&=\quad \frac{(\LL^{-1} \HH  \LL) \underline{\nu}_{n-1}}{\|(\LL^{-1} \HH \LL) \underline{\nu}_{n-1}\|}
  \quad=\quad
  \frac{\LL^{-1} \HH(n) \zz}{\|\LL^{-1} \HH(n) \zz\|}
  \,.
  \label{eqn:fixed-nu-evolution}
\end{align} 
Accordingly we obtain
\[
F_n(\zeta) \quad=\quad \frac{\|\LL^{-1} \HH(n) \zz\|}{\|\LL^{-1} \HH(n-1) \zz\|} F_{n-1} + 2 B_{n-1,1}(\zeta \wedge \sigma_{n-1}) \,.
\]
But asymptotically (considering the action of \(H(n)\) on \(\zz\) for large \(n\))
\[
 \frac{\|\LL^{-1} \HH(n) \zz\|}{\|\LL^{-1} \HH(n-1) \zz\|} \quad\sim\quad \left(\frac{n}{n-1}\right)^k
\]
(note that the index \(k\) satisfies \(k\geq1\)). 
Therefore the time-changed Brownian interpolation of the \(F_n/n^k\) 
(following the proof of Theorem \ref{thm:finite-look-ahead}) may be compared with a scalar Brownian motion run up to time
\[
  \sum_n \frac{1}{n^{2k}}\,.
\]
This sum converges, and therefore there is a positive probability of the Brownian motion not reaching zero.

\section{Conclusion} \label{sec:conc}
In this paper we have studied rates at which the (generalized) Kolmogorov diffusion can be coupled using Markovian or near-Markovian couplings. 
While this diffusion does possess successful Markovian couplings \cite{BenArousCranstonKendall-1995,KendallPrice-2004}, such couplings cannot be maximal (Theorem \ref{thm:noMarkKol}),
nor can there be an efficient Markovian coupling strategy (Theorem \ref{thm:notefficient}).
Moreover, at least in the classical case, there can be no optimal Markovian coupling (Theorem \ref{thm:noopt}).
By way of compensation, it \emph{is} possible to exhibit a simple efficient finite-look-ahead coupling strategy even for the generalized Kolmogorov diffusion (Theorem \ref{thm:finite-look-ahead}). 
Thus a controlled amount of anticipation suffices to obtain efficiency of coupling.

The results of \cite{BanerjeeKendall-2014} show that smooth elliptic diffusions only admit Markovian maximal couplings in cases of very special geometry.
The Kolmogorov diffusion is the simplest non-trivial nilpotent diffusion,
and so its failure to admit Markovian maximal couplings suggests the conjecture that no smooth non-trivial nilpotent diffusions
can admit Markovian maximal couplings. Certainly this seems plausible for the case of planar Brownian motion plus It\^o stochastic area 
\cite{BenArousCranstonKendall-1995,Kendall-2007,Kendall-2009d}: 
perhaps the conjecture can be resolved in the manner of \cite{BanerjeeKendall-2014}, using ideas from the geometry of nilpotent groups. 

The above results can be compared with those of \cite{Kendall-2013a}, concerning the coupling of scalar Brownian motion together with its local time at zero.
That case lies outside the range of nilpotent diffusions, however it does seem to provide relevant insights.
For the local time coupling a (partly numerical) argument shows that there is no Markovian maximal coupling, 
but a control-theoretic argument shows that a simple reflection / synchronous coupling is optimal Markovian.
It would be most interesting to determine whether the existence of efficient Markovian couplings for a given smooth diffusion, or of optimal Markovian couplings, 
can enforce geometric rigidity in a manner similar to the existence of maximal Markovian couplings.
(The results in \cite[Sections 3, 4]{BurdzyKendall-2000} can be viewed as providing a very preliminary exploration to this kind of problem in the special case 
of reflecting Brownian motion in a compact convex domain.)
This would further elucidate the enigmatic r\^ole of geometry in probabilistic coupling theory.

A further detailed question for future research is, whether it is possible to attain efficiency for a \emph{bounded horizon} finite-look-ahead coupling
for a Kolmogorov diffusion.
The coupling described in Theorem \ref{thm:finite-look-ahead} has a horizon which extends at a geometric rate over successive blocks: 
as noted after the proof of Theorem \ref{thm:finite-look-ahead}, the obvious bounded horizon coupling does not even have the property of being successful.
There are instances in which a small amount of look-ahead can have a dramatic influence on coupling rate -- see the work of Smith \cite{Smith-2011a} on coupling for a Gibbs' sampler on the simplex --
it would be very interesting if one could map out the circumstances in which this might apply in the relatively well-behaved case of smooth diffusions.
As exemplified in \cite{Smith-2011a}, probabilistic coupling has a large part to play in the analysis of random algorithms;
it would be of considerable advantage to gain some case history for the potential of modestly non-Markovian coupling 
to deliver faster couplings in the amenable instance of smooth diffusions.

\ack
This work was supported by EPSRC Research Grant EP/K013939.

We are grateful to an anonymous referee whose suggestions greatly improved the paper.

\bibliographystyle{apt}
\bibliography{Kolmogorov}

\newpage           

\end{document}